\author{Will Johnson}
\title{Largeness and generalized t-henselianity}
\DeclareMathOperator*{\ind}{\raise0.2ex\hbox{\ooalign{\hidewidth$\vert$\hidewidth\cr\raise-0.9ex\hbox{$\smile$}}}}
\newcommand{\hens}{\mathrm{hens}}
\newcommand{\Gal}{\operatorname{Gal}}
\newcommand{\Frac}{\operatorname{Frac}}
\newcommand{\Spec}{\operatorname{Spec}}
\newcommand{\res}{\operatorname{res}}
\newtheorem{theorem}{Theorem}[section] 
\newtheorem{lemma}[theorem]{Lemma}
\newtheorem{corollary}[theorem]{Corollary}
\newtheorem{assumption}[theorem]{Assumption}
\newtheorem{question}[theorem]{Question}
\newtheorem{proposition}[theorem]{Proposition}
\newtheorem{proposition-eh}[theorem]{Proposition(?)}
\newtheorem*{theorem-star}{Theorem}
\newtheorem*{conjecture-star}{Conjecture}
\newtheorem*{lemma-star}{Lemma}
\theoremstyle{definition}
\newtheorem{fact}[theorem]{Fact}
\newtheorem{definition}[theorem]{Definition}
\newtheorem{example}[theorem]{Example}
\newtheorem{remark}[theorem]{Remark}
\theoremstyle{remark}
\newtheorem{claim}[theorem]{Claim}
\newtheorem*{acknowledgment}{Acknowledgments}
\newcommand{\Aa}{\mathbb{A}}
\newcommand{\Qq}{\mathbb{Q}}
\newcommand{\alg}{\mathrm{alg}}
\newcommand{\Ee}{\mathcal{E}}
\newcommand{\Rr}{\mathbb{R}}
\newcommand{\Nn}{\mathbb{N}}
\newcommand{\Cc}{\mathbb{C}}
\newcommand{\Ff}{\mathbb{F}}
\newcommand{\Ll}{\mathcal{L}}
\newcommand{\mm}{\mathfrak{m}}
\newcommand{\ba}{{\bar{a}}}
\newcommand{\bb}{{\bar{b}}}
\newenvironment{claimproof}[1][\proofname]
               {
                 \proof[#1]
                 
               }
               {
                 \endproof
               }
\let\phi\varphi
\begin{document}

\maketitle

\begin{abstract}
  Let $K$ be a countable field.  Then $K$ is large in the sense of Pop~\cite{Pop-little}
  if and only if it admits a field topology which is ``generalized
  t-henselian'' (gt-henselian) in the sense of Dittmann, Walsberg, and
  Ye~\cite{field-top-2}, meaning that the implicit function theorem holds for
  polynomials.  Moreover, the \'etale open topology of \cite{firstpaper} can be
  characterized in terms of the gt-henselian topologies on $K$: a
  subset $U \subseteq K^n$ is open in the \'etale open topology if and
  only if it is open with respect to every gt-henselian topology on
  $K$.
\end{abstract}

\section{Introduction}
A field $K$ is \emph{large} \cite{Pop-little} if it satisfies the
following two equivalent conditions:
\begin{enumerate}
\item If $C$ is a smooth algebraic curve over $K$, and $C(K) \ne
  \varnothing$, then $C(K)$ is infinite.  Here, $C(K)$ denotes the set
  of $K$-rational points on $C$.  
\item Let $f(X,Y) \in K[X,Y]$ be a polynomial such that $f(0,0) = 0
  \ne \frac{\partial f}{\partial Y}(0,0)$, as in the implicit function
  theorem.  Then $\{(a,b) \in K^2 : f(a,b) = 0\}$ is infinite.
\end{enumerate}
There are many examples of large fields, including separably closed
fields, real closed fields, $p$-adically closed fields, fields
admitting non-trivial henselian valuations, pseudo finite fields, and
pseudo algebraically closed (PAC) fields.

Pop~\cite{Pop-henselian} showed that if $R$ is a henselian local
domain which is not a field, then $\Frac(R)$ is large.  In
\cite{large-hens}, we showed that all large fields arise this way, up
to elementary equivalence.  The present note is an addendum to
\cite{large-hens}, showing another connection between largeness and
``henselianity'' construed broadly.

In this paper, field topologies are assumed to be Hausdorff and non-discrete.  A field topology $\tau$ on $K$ is
\emph{generalized t-henselian} or \emph{gt-henselian}
\cite[Definition~8.1]{field-top-2} if it satisfies the following
equivalent conditions:
\begin{enumerate}
\item The inverse function theorem holds for polynomials: if $\bar{P} : K^n
  \to K^n$ is a polynomial map whose Jacobian matrix $\{\partial P_i /
  \partial X_j\}_{i,j}$ is invertible at $\bar{0} \in K^n$, then $P$
  is a local homeomorphism at $\bar{0}$, with respect to $\tau$.
\item The implicit function theorem holds for polynomials: if
  $\bar{P}(\bar{X},\bar{Y})$ is a polynomial map $K^n \times K^m \to
  K^m$ such that $\bar{P}(\bar{0},\bar{0}) = \bar{0}$ and the matrix $\{\partial P_i/\partial Y_j\}_{i,j}$ is invertible at $\bar{0}$, then there are $\tau$-neighborhoods $U \ni \bar{0}$ and $V \ni \bar{0}$ such that the set
  \begin{equation*}
    \{(\bar{a},\bar{b}) \in U \times V : \bar{P}(\bar{a},\bar{b}) = \bar{0}\}
  \end{equation*}
  is the graph of a $\tau$-continuous function $f : U \to V$.
\item If $f : V \to W$ is an \'etale morphism of $K$-varieties, then
  $V(K) \to W(K)$ is a local homeomorphism with respect to $\tau$.
\item For any $n \ge 2$ and neighborhood $U \ni -1$, there is a
  neighborhood $V \ni 0$ such that if $c_0,\ldots,c_{n-2} \in V$ then
  the polynomial $X^n + X^{n-1} + c_{n-2}X^{n-2} + \cdots + c_1X +
  c_0$ has a root in $U$.
\end{enumerate}
See \cite[Proposition~6.2]{tops-rings} for a proof of the equivalence,
which mostly comes from \cite[\S 8]{field-top-2}.  Generalized t-henselianity
first appeared in work of Pop under a different name.\footnote{If
$\tau$ is a field topology on $K$, then $\tau$ is gt-henselian
if and only if $K$ is $\tau$-henselian in the sense of Pop (see
discussion after \cite[Theorem~1.6]{Pop-little}).}
\begin{fact} \phantomsection \label{vest}
  \begin{enumerate}
  \item \label{v1} If $\tau$ is a gt-henselian topology on $K$, then $K$ is large
    \cite[Corollary~8.15]{field-top-2}, essentially by the implicit
    function theorem.
  \item \label{v2} If $R$ is a henselian local domain and $K = \Frac(R) \ne R$,
    then the family of sets $\{aR + b : a,b \in K, ~ a \ne 0\}$ is a
    basis for a gt-henselian topology on $R$
    \cite[Proposition~8.5]{field-top-2}.
  \item A field topology $\tau$ is \emph{topologically henselian} (\emph{t-henselian}) in the sense
  of Prestel and Ziegler \cite[\S 7]{PZ} if and only if $\tau$ is a V-topology (meaning that $\tau$ is
    induced by an absolute value or valuation~\cite[Appendix~B]{PE}) and $\tau$ is gt-henselian
    \cite[Proposition~8.3]{field-top-2}.
  \item The following topological fields are
    t-henselian, hence gt-henselian:
    \begin{itemize}
    \item The valuation topology on a henselian valued field
      \cite[Definition~7.1]{PZ}.
    \item The standard ``analytic'' topologies on $\Rr, \Cc, \Qq_p$
      \cite[Corollary~7.3]{PZ}.
    \item The order topology on real closed fields \cite[Theorem~6.1
      and Corollary~7.3]{PZ}.
    \end{itemize}
  \end{enumerate}
\end{fact}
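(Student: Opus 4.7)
For (v1), the plan is direct from characterization (2) of gt-henselianity: given $f(X,Y) \in K[X,Y]$ with $f(0,0) = 0$ and $\partial f/\partial Y(0,0) \ne 0$, the implicit function theorem for polynomials produces $\tau$-neighborhoods $U \ni 0$, $V \ni 0$ and a $\tau$-continuous $g : U \to V$ whose graph is the zero set of $f$ in $U \times V$. Since $\tau$ is non-discrete Hausdorff, $U$ is infinite, so $\{(a,b) \in K^2 : f(a,b) = 0\}$ is infinite and $K$ is large. (The curve version follows by translating a smooth $K$-point to the origin and applying the polynomial version locally.)

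For (v2), I would break things into three pieces. First, check that $\{aR + b : a \in K^\times,\ b \in K\}$ really is a basis: closure under binary intersections up to refinement reduces to the observation that $a_1 R \cap a_2 R \supseteq a_1 a_2 R$, valid in any domain. Second, verify this generates a Hausdorff, non-discrete field topology; addition is continuous by translation-invariance, multiplication by the containment $aR \cdot a'R \subseteq aa'R$, and non-discreteness follows from $R \ne K$ (so $\mm \ne 0$). Hausdorffness separates $0$ from a nonzero $x$ using the basis sets $xmR$ and $xmR + x$ for any nonzero $m \in \mm$, noting that $1/m \notin R$; continuity of inversion is the most delicate point and uses the local structure of $R$ in a nontrivial way. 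Third, verify gt-henselianity via characterization (4): for $c_0, \ldots, c_{n-2} \in \mm$, the polynomial $X^n + X^{n-1} + c_{n-2}X^{n-2} + \cdots + c_0$ reduces modulo $\mm$ to $X^{n-1}(X+1)$, which has $-1$ as a simple root (its derivative at $-1$ equals $(-1)^{n-1} \ne 0$); Hensel's lemma in the henselian local ring $R$ lifts this to a root in $-1 + \mm$, and one observes that a sufficiently small $\tau$-neighborhood of $0$ in $K$ sits inside $\mm$.

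Parts (v3) and (v4) are essentially definitional. For (v3), the t-henselian axiom of Prestel--Ziegler on a V-topology is precisely characterization (4) of gt-henselianity restricted to that setting, so the two notions coincide on V-topologies, while gt-henselian allows more general field topologies. For (v4), each listed topology is a V-topology (the order topology on a real closed field is the valuation topology for the convex hull of $\Zz$), and Hensel's lemma for the underlying valuation is exactly what is needed to establish t-henselianity, hence gt-henselianity by (v3).

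The main obstacle in this program is the topological verification in (v2): showing that the family $\{aR + b\}$ actually defines a field topology — especially continuity of inversion, which fails for naive attempts when $R$ is not a valuation ring — requires genuinely using the henselian local structure of $R$, and the passage from Hensel's lemma for $R$-coefficient polynomials to characterization (4) for coefficients in a $\tau$-small neighborhood of $0$ in $K$ relies on the observation that such a neighborhood can be chosen inside $\mm \subseteq R$.
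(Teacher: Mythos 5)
The paper offers no proof of this Fact: all four items are imported from the literature (items (1)--(3) from \cite[Corollary~8.15, Propositions~8.5 and 8.3]{field-top-2}, item (4) from Prestel--Ziegler), so the only comparison available is between your sketch and those sources. Your item (1) is fine: the zero set contains the graph of a continuous function on a nonempty open set, and in a Hausdorff non-discrete field topology every nonempty open set is infinite. But your item (2) has two steps that fail as written. First, the containment $a_1R \cap a_2R \supseteq a_1a_2R$ is \emph{not} valid in a general domain: it is equivalent to $a_1 \in R$ and $a_2 \in R$, and fails for $a_i \in K \setminus R$ (e.g.\ $R = k[[x,y]]$, $a_1 = 1/x$, $a_2 = 1/y$: then $a_1a_2 = 1/(xy) \notin (1/x)R$ because $x/(xy) = 1/y \notin R$). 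The correct refinement writes $a_i = r_i/s_i$ with $r_i,s_i \in R$ and uses $r_1r_2R \subseteq a_1R \cap a_2R$. Second, your verification of gt-henselianity only produces a root in $-1+\mm$, whereas characterization (4) demands a root in an \emph{arbitrary} prescribed basic neighborhood $-1 + aR$ of $-1$. The missing step: writing the Hensel lift as $\rho = -1+\epsilon$, the identity $\rho^{n-1}\epsilon = -(c_{n-2}\rho^{n-2}+\cdots+c_0)$ together with $\rho \in R^\times$ gives $\epsilon \in c_0R + \cdots + c_{n-2}R$, so choosing the coefficient neighborhood $V = rmR \subseteq aR \cap \mm$ (where $a = r/s$ and $0 \ne m \in \mm$) forces the root into $-1+aR$.

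Two smaller points. Item (3) is not ``essentially definitional'': the Prestel--Ziegler axiom only asks that $X^n + X^{n-1} + c_{n-2}X^{n-2} + \cdots + c_0$ with small $c_i$ have \emph{some} root in $K$, with no requirement that the root lie near $-1$, so the equivalence with gt-henselianity for V-topologies has real content (this is exactly what \cite[Proposition~8.3]{field-top-2} proves). And in item (4), the analytic topologies on $\Rr$ and $\Cc$ are V-topologies induced by the archimedean absolute value, not by a valuation, so ``Hensel's lemma for the underlying valuation'' is unavailable there; one uses the analytic implicit function theorem (or Prestel--Ziegler's Corollary~7.3) instead.
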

\noindent Our main theorem is a partial converse to Fact~\ref{vest}(\ref{v1}):
\begin{theorem}[{= Theorem~\ref{exists}$+$Fact~\ref{vest}(\ref{v1})}] \label{main-thm}
  Let $K$ be a countable field.  Then $K$ is large if and only if $K$
  admits a gt-henselian field topology.
\end{theorem}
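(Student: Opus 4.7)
The $(\Leftarrow)$ direction is precisely Fact~\ref{vest}(\ref{v1}) and needs no countability hypothesis. So only the forward direction of Theorem~\ref{exists} demands proof: given a countable large field $K$, produce a gt-henselian topology on $K$. The plan is to construct a proper henselian local subring $R \subsetneq K$ with $\Frac(R) = K$, whereupon Fact~\ref{vest}(\ref{v2}) supplies the required topology immediately.

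Existence of such a subring \emph{in an elementary extension of $K$} is essentially already available. Expand $\Lr$ by a unary predicate $\mathbf{R}$ and let $T^+$ be the $\Lr \cup \{\mathbf{R}\}$-theory of pairs $(F,A)$ for which $A \subsetneq F$ is a local subring with $\Frac(A) = F$ and $A$ satisfies Hensel's lemma; each of these requirements is axiomatized by a recursive first-order scheme. By the main theorem of \cite{large-hens}, $K$ is elementarily equivalent to $\Frac(R^*)$ for some proper henselian local domain $R^*$, so $T^+$ is consistent with $\mathrm{Th}(K)$, and some elementary extension $(\hat{K},\hat{R}) \succeq K$ carries a suitable subring $\hat R$.

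The crux is descent to $K$ itself. The plan is to enumerate $K = \{a_n : n < \omega\}$ and build $R$ by a countable back-and-forth: at stage $n$, extend a finite commitment of elements of $K$ to membership in $R$ or in $K \setminus R$, together with witnesses $a = b/c$ (with $b,c$ already committed to $R$) that will ensure $\Frac(R) = K$ in the limit. At each stage, consistency with $T^+$ is verified in a saturated elementary extension, while largeness of $K$ is invoked to realize, inside $K$ rather than in an extension, the henselian roots that prior commitments force into existence.

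The main obstacle will lie in precisely this realization step. When the partial data forces a polynomial $f(X) \in K[X]$ with $f(0)$ in the would-be maximal ideal and $f'(0)$ a would-be unit to have a root in the maximal ideal, one must exhibit such a root inside $K$ that respects every prior commitment. The issue is inherently chicken-and-egg, since ``the maximal ideal'' is a moving target defined by the very subring under construction. I expect this to be navigable because the implicit function theorem formulation of largeness produces infinitely many $K$-rational points on the relevant algebraic curves near the trivial solution, while each finite stage of the back-and-forth imposes only finitely many exclusion conditions that this infinitude can evade. Once $R \subsetneq K$ with $\Frac(R) = K$ has been assembled, Fact~\ref{vest}(\ref{v2}) closes the argument.
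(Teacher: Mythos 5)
Your reduction to ``find a proper henselian local subring $R \subsetneq K$ with $\Frac(R) = K$ and apply Fact~\ref{vest}(\ref{v2})'' has a fatal obstruction: such a subring simply does not exist for some countable large fields. Take $K = \Ff_p^{\alg}$ (or any infinite algebraic extension of $\Ff_p$; these are PAC, hence large, and countable). Every subring of $K$ is integral over $\Ff_p$, so every nonzero element of a subring is a root of unity and the subring is already a field; thus $\Frac(R) = K$ forces $R = K$, and there is no proper $R$ to find. Equivalently, the topology produced by Fact~\ref{vest}(\ref{v2}) is always locally bounded, and Remark~\ref{no-bound} shows $\Ff_p^{\alg}$ admits no locally bounded field topology at all. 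So the back-and-forth you describe is not merely delicate --- its target is empty, and no amount of care in the ``realization step'' can rescue it. (This is exactly why the statement of Proposition~\ref{up} passes to an elementary \emph{extension}: the subring exists up to elementary equivalence, but does not descend.)

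The paper's actual route avoids subrings entirely. It builds a neighborhood basis of $0$ directly, as an increasing union of ``essentially finite suitable sequences'' --- finite-set approximations to the axioms of a gt-henselian topology (Fact~\ref{ofcourse}), in the style of Kiltinen's construction of field topologies on countable fields. The henselian root conditions that each extension step forces are first realized inside $R = K[[t]]$ (or $K[t]_{(t)}^{\hens}$), where Hensel's lemma supplies the roots, and are then pulled back into $K$ by expressing the finite configuration with a quantifier-free formula and invoking Kuhlmann's theorem (Fact~\ref{Kuhlmann}) that $K$ is large if and only if $K$ is existentially closed in $K((t))$. Your instinct that largeness should let you realize witnesses inside $K$ is the right one, but it must be applied to finite fragments of a topology, not to an entire subring.
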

Although many examples of large fields carry obvious gt-henselian
topologies, many others do not, such as the following:
\begin{itemize}
\item Infinite algebraic extensions of $\Ff_p$, such as $\Ff_p^{\alg}$.  Such extensions are PAC.
\item Pseudo finite fields, such as an ultraproduct of finite fields,
  or the fixed field of a random automorphism $\sigma \in
  \Gal(\Qq^{\alg}/\Qq)$ (see \cite[Theorem~20.5.1]{field-arithmetic}).
\end{itemize}
In fact, these fields don't admit t-henselian topologies,\footnote{An
algebraic extension $K/\Ff_p$ has no non-trivial valuations or
absolute values, so it admits no V-topologies.  If a pseudo finite
field admitted a t-henselian topology, the proof of
\cite[Remark~7.11]{PZ} would show that some finite field admits a
t-henselian topology.  But finite fields do not admit field
topologies} which makes it surprising that they admit
\emph{gt}-henselian topologies.

Theorem~\ref{main-thm} raises the following natural question, which I
was unable to answer:
\begin{question}
  Does Theorem~\ref{main-thm} generalize to uncountable fields?  In
  other words, does every large field admit a gt-henselian field
  topology?
\end{question}
\noindent Model-theoretically, Theorem~\ref{main-thm} has the following
consequence:
\begin{corollary} \label{down}
  A field $(K,+,\cdot)$ is large if and only there is an elementary
  substructure $(K_0,+,\cdot) \preceq (K,+,\cdot)$ admitting a
  gt-henselian topology.
\end{corollary}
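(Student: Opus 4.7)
The plan is to reduce both directions to Theorem~\ref{main-thm} via the observation that \emph{largeness is a first-order property} of fields in the language of rings. This axiomatizability is routine: using condition~(2) from the definition of largeness, for each bidegree $(d_1,d_2)$ and each integer $N \ge 1$ one writes an axiom which, after quantifying over the coefficients of a polynomial $f(X,Y)$ of bidegree at most $(d_1,d_2)$ satisfying $f(0,0)=0 \ne \partial f/\partial Y(0,0)$, asserts the existence of $N$ distinct zeros of $f$ in $K^2$. Consequently largeness transfers both up and down along elementary inclusions.

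For the $(\Leftarrow)$ direction, suppose $K_0 \preceq K$ carries a gt-henselian topology. Then $K_0$ is large by Fact~\ref{vest}(\ref{v1}), and by the elementarity of largeness, $K$ is large as well.

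For the $(\Rightarrow)$ direction, assume $K$ is large and apply downward L\"owenheim--Skolem to produce a countable elementary substructure $K_0 \preceq K$. Elementarity again forces $K_0$ to be large, and since $K_0$ is countable, Theorem~\ref{main-thm} supplies the desired gt-henselian topology on $K_0$.

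No step should present a genuine obstacle here: all the real content is packaged inside Theorem~\ref{main-thm}, and the remaining ingredients — axiomatizability of largeness and the downward L\"owenheim--Skolem theorem — are standard.
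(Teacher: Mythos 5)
Your proof is correct and follows essentially the same route as the paper: both directions reduce to Theorem~\ref{main-thm} via downward L\"owenheim--Skolem and the fact that largeness is preserved under elementary equivalence (which the paper simply cites, while you sketch the standard axiomatization).
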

\begin{proof}
  If $K$ is large, the downward L\"owenheim-Skolem theorem gives a
  countable elementary substructure $K_0 \preceq K$.  Largeness is
  preserved under elementary equivalence, so $K_0$ is large and admits
  a gt-henselian topology by Theorem~\ref{main-thm}.  Conversely,
  suppose $K_0 \preceq K$ and $K_0$ admits a gt-henselian topology
  $\tau$.  Then $K_0$ is large by Fact~\ref{vest}(\ref{v1}) and so $K$
  is large.
\end{proof}
\noindent Corollary~\ref{down} is dual to the following consequence of
\cite{large-hens}:
\begin{proposition} \label{up}
  A field $(K,+,\cdot)$ is large if and only if there is an elementary
  extension $(K_1,+,\cdot) \succeq (K,+,\cdot)$ admitting a locally
  bounded gt-henselian topology.
\end{proposition}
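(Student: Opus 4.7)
The proposition is a biconditional; the easy direction follows directly from Fact~\ref{vest}(\ref{v1}), and the hard direction is obtained by a compactness argument layered on top of the main theorem of \cite{large-hens}.

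For the easy direction, suppose $K_1 \succeq K$ carries a gt-henselian topology (local boundedness plays no role here). Fact~\ref{vest}(\ref{v1}) yields that $K_1$ is large. Largeness is an axiom scheme in the ring language (condition~(2) of the definition in the introduction, parametrised by the polynomial $f$), hence is preserved under elementary equivalence; so $K \equiv K_1$ forces $K$ to be large.

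For the hard direction, assume $K$ is large. The main theorem of \cite{large-hens} supplies a henselian local domain $R_0$ with $R_0 \ne F_0 := \Frac(R_0)$ and $F_0 \equiv K$ in the ring language. The goal is to lift this elementary equivalence to an elementary \emph{extension}. To that end, work in the expanded language $\Ll^+ := \Lr \cup \{P\}$ for a new unary predicate $P$, and let $T_H$ be the $\Ll^+$-theory asserting: $P$ defines a subring which is a local domain; $P$ is henselian (a first-order scheme, one axiom per monic degree $n$); $P$ is a proper subset of the field; and every field element has the form $y/z$ with $y, z \in P$ and $z \ne 0$. Each axiom is first-order, and $(F_0, R_0)$ is a model of $T_H$. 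Consider $T := T_H \cup \operatorname{Eldiag}(K)$ in $\Ll^+$ with a constant symbol for each element of $K$. A finite fragment of $T$ involves finitely many axioms of $T_H$ together with a single conjunction $\phi(\bar{c})$ from $\operatorname{Eldiag}(K)$; since $F_0 \equiv K$ and $K \models \exists \bar{x}\, \phi(\bar{x})$, some $\bar{a} \in F_0$ realises $\phi$, and then $(F_0, R_0)$ with $\bar{c}$ interpreted as $\bar{a}$ models the fragment. Compactness thus produces a model $(K_1, R_1) \models T$; its ring reduct gives $K \preceq K_1$ together with a henselian local subdomain $R_1 \subseteq K_1$ satisfying $R_1 \ne K_1 = \Frac(R_1)$.

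It remains to produce the topology. By Fact~\ref{vest}(\ref{v2}), the family $\{aR_1 + b : a, b \in K_1,\ a \ne 0\}$ is a basis for a gt-henselian topology $\tau$ on $K_1$. The set $R_1 = 1 \cdot R_1 + 0$ is a $\tau$-neighbourhood of $0$, and for every basic neighbourhood $aR_1$ of $0$ we have $(aR_1) \cdot R_1 \subseteq aR_1$ because $R_1$ is closed under multiplication; so $R_1$ is $\tau$-bounded and $\tau$ is therefore locally bounded. The main obstacle in this plan is verifying that henselianity of $P$ can be captured by a first-order axiom scheme in $\Ll^+$ (using that $P$'s group of units, and hence its maximal ideal, is quantifier-definable in $\Ll^+$); once this is granted, the rest of the forward direction is a routine compactness argument.
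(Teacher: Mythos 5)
Your proof is correct and follows essentially the same route as the paper: obtain an elementary extension $K_1 = \Frac(R_1)$ with $R_1$ a proper henselian local subdomain via \cite{large-hens}, apply Fact~\ref{vest}(\ref{v2}) for the gt-henselian topology, observe local boundedness, and get the converse from Fact~\ref{vest}(\ref{v1}) plus preservation of largeness under elementary equivalence. The only difference is that the paper cites \cite[Remark~8.9]{large-hens} directly for the elementary extension and \cite[Theorem~2.2(a)]{PZ} for local boundedness, whereas you rederive the former by a (correct) compactness argument over $T_H \cup \operatorname{Eldiag}(K)$ and verify the latter by hand.
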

\noindent See \cite[\S 2]{PZ} for background on locally bounded field
topologies.
\begin{proof}
  If $K$ is large, then \cite[Remark~8.9]{large-hens} shows that there
  is an elementary extension $K_1$ such that $K_1 = \Frac(R)$ for some
  proper subring $R \subsetneq K_1$ such that $R$ is a henselian local
  domain.  By Fact~\ref{vest}(\ref{v2}), $R$ induces a gt-henselian
  topology on $K_1$, which is locally bounded
  \cite[Theorem~2.2(a)]{PZ}.  The rest of the proof proceeds like
  Corollary~\ref{down}.
\end{proof}
\begin{remark} \label{no-bound}
  In Theorem~\ref{main-thm} and Corollary~\ref{down}, we cannot ask
  for the gt-henselian topology to be locally bounded, because
  $\Ff_p^{\alg}$ and its subfields admit no locally bounded field
  topologies.  This fact is presumably well-known, and easy to prove
  as follows.  Suppose $K/\Ff_p$ is algebraic and $\tau$ is a locally
  bounded field topology on $K$.  By \cite[Lemma~2.1(e)]{PZ}, there is
  an open neighborhood $U \ni 0$ such that $\{cU : c \in K^\times\}$
  is a neighborhood basis of 0.  Take some $c \in K^\times$ such that
  $cU \subsetneq U$.  Then $c^n U \subsetneq U$ for every $n \ge 1$.
  But $c^{p^k} = 1$ for $k = [\Ff_p(c) : \Ff_p]$, giving the absurd
  conclusion $U = c^{p^k} U \subsetneq U$.
\end{remark}
By being more careful with the proof of Theorem~\ref{main-thm}, we can
also say something about the \emph{\'etale-open topology} $\Ee$ over
$K$, a key technical tool used in \cite{large-hens} and an earlier
version of the present note.  We first recall the definition of the
\'etale-open topology:
\begin{definition}
  If $V$ is a $K$-variety, an \emph{E-set} on $V$ is a subset of
  $V(K)$ of the form $f(W(K))$ for some \'etale morphism $f : W \to V$
  of $K$-varieties.  The \emph{\'etale-open topology} or
  \emph{$\Ee$-topology} on $V(K)$ is the topology with E-sets as basic open sets.
\end{definition}
We prove the following characterization of the $\Ee$-topology in terms
of gt-henselian topologies:
\begin{theorem}[{= Theorem~\ref{2m}}] \label{second-main}
  Let $K$ be countable.  If $V$ is a $K$-variety, a subset $U
  \subseteq V(K)$ is $\Ee$-open if and only if $U$ is $\tau$-open for
  every gt-henselian field topology $\tau$ on $K$.
\end{theorem}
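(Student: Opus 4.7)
The plan is to prove the two inclusions separately. One direction follows immediately from the definition of gt-henselianity; the other requires a refinement of the construction used in Theorem~\ref{exists}, i.e.\ the hard half of Theorem~\ref{main-thm}.

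For the easy direction, suppose $U \subseteq V(K)$ is $\Ee$-open, so that $U$ is a union of sets $f(W(K))$ with $f : W \to V$ \'etale. Fix any gt-henselian topology $\tau$ on $K$. By condition~(3) of the gt-henselian definition, each $f : W(K) \to V(K)$ is a local homeomorphism with respect to $\tau$, hence an open map, so each $f(W(K))$ is $\tau$-open. Therefore $U$ is $\tau$-open.

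For the hard direction, I would argue the contrapositive. Suppose $U$ is not $\Ee$-open, witnessed by some $p \in U$ lying in no E-set contained in $U$. Then for every \'etale $f : W \to V$ with $p \in f(W(K))$, the image $f(W(K))$ meets $V(K) \setminus U$. The goal is to produce a gt-henselian field topology $\tau$ on $K$ for which $p$ belongs to the $\tau$-closure of $V(K) \setminus U$; then $U$ is not a $\tau$-neighborhood of $p$. The approach is to revisit the construction of $\tau$ used in Theorem~\ref{exists}, which presumably builds $\tau$ by a countable recursion producing a descending basis $U_0 \supseteq U_1 \supseteq \cdots$ of neighborhoods of $0 \in K$ and verifying the gt-henselianity axioms (most naturally, the polynomial root-finding axiom~(4)). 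Using countability of $K$, enumerate the \'etale morphisms $f_n : W_n \to V$ with $p \in f_n(W_n(K))$, pick witnesses $q_n \in f_n(W_n(K)) \setminus U$, and interleave into the recursion the additional demand that at stage~$n$, the induced basic neighborhood of $p$ in $V(K)$ contains $q_n$. Pulling back to $W_n(K)$ along $f_n$ converts this into a constraint on how small the neighborhood of a preimage of $p$ can be made, which is the kind of condition the underlying recursion is already designed to accommodate.

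The main obstacle is checking compatibility between these additional demands and the gt-henselianity conditions from the original construction. The reason this should work is that the gt-henselian axiom~(3) is itself phrased in terms of \'etale morphisms, so the auxiliary data $f_n$ used to keep $p$ a limit point of $V(K) \setminus U$ is of the same kind as the data already driving the construction of $\tau$; forcing $q_n$ into the basic neighborhood of $p$ is a natural strengthening of the local-homeomorphism requirement for $f_n$, not a transverse constraint. Once compatibility is verified, Hausdorffness, non-discreteness, and the remaining gt-henselianity axioms for the resulting $\tau$ follow from the same bookkeeping as in Theorem~\ref{exists}, and the argument goes through. Countability of $K$ is used here exactly as in Theorem~\ref{main-thm}, so the extension of this characterization to uncountable fields presumably remains open, parallel to the open question stated after Theorem~\ref{main-thm}.
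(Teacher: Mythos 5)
The easy direction is fine and matches the paper (which cites Fact~\ref{old-facts}(\ref{of2}), i.e.\ \cite[Lemma~8.14]{field-top-2}): an \'etale image is $\tau$-open because \'etale maps are $\tau$-local homeomorphisms. The overall shape of your hard direction is also right --- pass to the contrapositive and build a gt-henselian $\tau$ in which $p$ stays in the closure of $V(K)\setminus U$ --- and this is exactly Proposition~\ref{wave} and Corollary~\ref{wave2} (after first reducing to $V=\Aa^n$ via the systems-of-topologies machinery of Fact~\ref{affine-reduction}, a reduction you skip but which the paper needs in order to run the algebra below at the point $p$).

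However, the mechanism you propose for the hard direction has a genuine gap. You want to enumerate \'etale maps $f_n$, pick witnesses $q_n\in f_n(W_n(K))\setminus U$ \emph{in advance}, and then force each $q_n$ into the $n$-th basic neighborhood of $p$. Forcing a prescribed element of $K$ into a basis set $A_{i+1}$ is not a constraint the recursion behind Theorem~\ref{exists} can absorb: condition $C_i$ demands that for all $c_0,\dots,c_{n_i-2}\in A_{i+1}$ the polynomial $X^{n_i}+X^{n_i-1}+\cdots+c_1X+c_0$ have a simple root in $-1+A_i$, and for an arbitrary pre-chosen $c_0\in K$ such a root need not exist in $K$ at all (e.g.\ $X^2+X+c_0$ over $\Rr$ with $c_0>1/4$). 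The paper's construction only ever inserts new seed elements that are infinitesimally close to $0$ inside an auxiliary henselian local domain $R$, where henselianity guarantees the required roots, and then transfers the entire finite configuration back to $K$ in one step using that $(K,S)$ is existentially closed in an expansion $(R,S_R)$; this is Lemma~\ref{bridge}, built from the \'etale local ring of $\Aa^n$ at $\bar 0$ mapped into a saturated elementary extension of $(K,S)$, with existential closedness ultimately resting on largeness via Kuhlmann's theorem. In particular the witnesses in $K$ are produced \emph{a posteriori} by existential closedness and cannot be fixed beforehand. Your heuristic that the $f_n$ are ``the same kind of data already driving the construction'' is also inaccurate: the recursion is driven by the ring-closure and root-finding conditions $C_i$, not by \'etale morphisms, and the compatibility you flag as ``the main obstacle'' is precisely the content of Lemmas~\ref{bridge}, \ref{re-iterate} and \ref{re-ec-trick}, which your sketch does not supply.
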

\noindent Again, it would be nice to know whether Theorem~\ref{second-main}
holds for uncountable fields.

\subsection{A note on the proofs}
Kiltinen \cite{kiltinen} showed that any countable field $K$ admits a
field topology, and Podewksi \cite{podewski} showed that $K$ in fact
admits $2^{2^{\aleph_0}}$-many distinct field topologies.  If I
understand correctly, the strategy (described in \cite[\S
  1]{kiltinen}) for building topologies is to inductively build an array of finite subsets of
$K$
\begin{gather*}
  \begin{pmatrix}
    V^0_0 & V^0_1 & V^0_2 & \cdots \\
    V^1_0 & V^1_1 & V^1_2 & \cdots \\
    V^2_0 & V^2_1 & V^2_2 & \cdots \\
    \vdots & \vdots & \vdots & \ddots
  \end{pmatrix}
\end{gather*}
such that
\begin{itemize}
\item The rows are decreasing ($V^j_i \supseteq V^j_{i+1}$), and each
  row ends with finitely many copies of $\{0\}$.
\item The columns are increasing ($V^j_i \subseteq V^{j+1}_i$).
\item The $j$th row satisfies the Conditions that would make
  $\{V^j_i : i \in \Nn\}$ be a neighborhood basis for a field
  topology, except for the non-discreteness condition that $V^j_i \ne
  \{0\}$.
\item Row $j+1$ is obtained from row $j$ by adding one new element in
  column $j$, and closing under the Conditions.
\end{itemize}
Then one takes $V^\omega_i = \bigcup_{j=0}^\infty V^j_i$.  The
Conditions on $V^j_\bullet$  transfer to $V^\omega_\bullet$ in
the limit, and the fact that we added one new element in each row
ensures that each set $V^\omega_\bullet$ is strictly bigger than
$\{0\}$.  Then $\{V^\omega_i : i \in \Nn\}$ is a neighborhood basis
for a field topology on $K$.

Our construction works in a similar way, but adding some additional
Conditions for gt-henselianity.  These new Conditions complicate the
construction of row $j+1$ from row $j$.  For example, one of the
conditions might say something like ``if $x \in V_{23}$, then
$\sqrt{1+x}-1 \in V_{22}$'', but we don't have a unique choice of a
square root.  The original plan was to use the machinery of \emph{Nash
functions} from \cite[Section~7]{large-hens}, building off the
fact that \'etale morphisms usually induce local homeomorphisms in the
$\Ee$-topology \cite[Theorem~B]{large-hens}.  However, this
machinery fails for $\Ff_p^\alg$, and so we use a slightly different
proof based on the following theorem of F.-V. Kuhlmann \cite[Theorem~15]{kuhlmann}
\begin{fact} \label{Kuhlmann}
  A field $K$ is large if and only if $K$ is existentially closed in
  the field of Laurent series $K((t))$.
\end{fact}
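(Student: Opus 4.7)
For $(\Leftarrow)$, I would start by observing that $K((t)) = \Frac(K[[t]])$ is the fraction field of a henselian local domain, so by Fact~\ref{vest}(\ref{v2}) it carries a gt-henselian topology, and by Fact~\ref{vest}(\ref{v1}) is therefore large. For any polynomial $f(X,Y) \in K[X,Y]$ satisfying condition~(2) in the definition of largeness, this yields infinitely many zeros of $f$ in $K((t))^2$; for each $n$, the statement ``there exist $n$ pairwise distinct zeros of $f$'' is existential with parameters in $K$, and existential closedness transfers it to $K$. Hence $K$ is large.

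For $(\Rightarrow)$, assume $K$ is large. I translate the task geometrically: for every locally closed $K$-subvariety $X = V \cap D(g) \subseteq \Aa^n_K$ with $X(K((t))) \ne \varnothing$, I must show $X(K) \ne \varnothing$. Fix $\bar{a} \in X(K((t)))$. The plan is a three-step reduction, first to a smooth geometrically integral situation, then to a $K$-curve, and finally to a valuative-criterion-plus-largeness argument.

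\emph{Step 1.} Pass to the irreducible component of $V_{\mathrm{red}}$ containing $\bar{a}$. It is geometrically integral over $K$ because $K$ is relatively algebraically closed in $K((t))$: any finite $K' \subseteq K((t))$ has value group that is a torsion subgroup of $\Zz$, hence trivial, so $K' \subseteq K[[t]]$, and reduction modulo $t$ gives an injective $K$-linear map $K' \hookrightarrow K$, forcing $K' = K$. If $\bar{a}$ is not smooth, recurse on the strictly lower-dimensional singular locus; the recursion terminates at a smooth $K((t))$-point, or at dimension zero, where the same relative algebraic closedness produces a $K$-point directly. \emph{Step 2.} Now $V$ is smooth geometrically integral of some dimension $d$, with $\bar{a}$ a smooth $K((t))$-point. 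By the valuative criterion on a projective compactification $\bar{V}$, the point $\bar{a}$ extends to a $K[[t]]$-point; switching to an affine chart of $\bar{V}$ containing its specialization, I may assume $\bar{a} \in K[[t]]^n$. Cut $V$ down by $d-1$ affine hyperplanes $\{\ell_i = d_i\}$, with generic $K$-linear forms $\ell_i$ and $d_i := \ell_i(\bar{a}) \bmod t \in K$. Because $\ell_i(\bar{a}) - d_i \in tK[[t]]$ is $t$-adically small, the implicit function theorem in the gt-henselian field $K((t))$ produces a $K((t))$-point $\bar{a}'$ of $V \cap \bigcap_i \{\ell_i = d_i\}$ close to $\bar{a}$; continuity of $g$ gives $g(\bar{a}') \ne 0$. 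A Bertini argument (using that $K$ is infinite) makes the intersection a smooth $K$-curve $C$, and $\bar{a}' \in (C \cap X)(K((t)))$. \emph{Step 3.} Pass to the smooth projective model $\bar{C}$ of $C$. The valuative criterion of properness gives $\bar{C}(K((t))) = \bar{C}(K[[t]])$, so reducing $\bar{a}'$ modulo $t$ yields a $K$-point of $\bar{C}$. Largeness of $K$ then forces $\bar{C}(K)$ to be Zariski dense in $\bar{C}$, so it meets the nonempty open $C \cap X$, and $X(K) \ne \varnothing$.

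The hard part will be Step~2: choosing Bertini-generic $K$-linear forms together with $K$-rational target values (via $t$-adic truncation of $\ell_i(\bar{a})$) that cut out a smooth curve still carrying a $K((t))$-point close to $\bar{a}$ and inside $D(g)$. Arranging $\bar{a} \in K[[t]]^n$ via the valuative criterion on the projective compactification is what makes $d_i \in K$ meaningful, and gt-henselianity of $K((t))$ is what lets the implicit function theorem promote the $t$-adic approximation $\ell_i(\bar{a}) \approx d_i$ to an actual $K((t))$-solution. Once Step~2 is achieved, Step~3 is fairly standard.
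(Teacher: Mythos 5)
First, note that the paper does not prove this statement: it is quoted as a known theorem of F.-V.\ Kuhlmann (\cite[Theorem~15]{kuhlmann}) and used as a black box, so your proposal can only be measured against the standard proof in the literature, not against anything internal to this paper. Your $(\Leftarrow)$ direction is correct and is the standard argument: $K((t))=\Frac(K[[t]])$ is large by Fact~\ref{vest}, and ``$f$ has $n$ distinct zeros'' is an existential $\Ll_{rings}(K)$-sentence, so it descends. Your $(\Rightarrow)$ direction follows the right overall strategy (reduce to a curve by linear sections, specialize a $K[[t]]$-point of the smooth projective model to get a $K$-rational point, then invoke largeness to get infinitely many), which is essentially how the known proofs go.

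There is, however, a concrete error in Step~2. Since $d_i=\ell_i(\bar a)\bmod t=\ell_i(\bar a_0)$, every hyperplane $\{\ell_i=d_i\}$ is forced to pass through the specialization $\bar a_0=\bar a\bmod t\in V(K)$, and in the hard case $\bar a_0$ is a \emph{singular} point of $V$ (if it were smooth, the whole curve-cutting detour would be unnecessary). If $\dim T_{\bar a_0}V>d$, then no choice of $d-1$ hyperplanes through $\bar a_0$ can make $V\cap\bigcap_i\{\ell_i=d_i\}$ smooth at $\bar a_0$; so ``a Bertini argument makes the intersection a smooth $K$-curve $C$'' is false as stated. The repair is that global smoothness of $C$ is not what you need: you need (i) $C$ smooth at the perturbed point $\bar a'$, which does follow from your transversality/IFT setup since $\bar a'$ is $t$-adically close to the smooth point $\bar a$ and the $\ell_i$ have independent differentials there, so that $\bar a'$ lifts to the normalization $\bar C$ of the projective closure; and (ii) the observation — which is the real content of Step~3 and which you do state — that the specialization of the lifted $K[[t]]$-point lands in $\bar C(K)$, i.e., the branch of $C$ at $\bar a_0$ picked out by the Laurent-series point is $K$-rational even though $\bar a_0$ itself is singular. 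A smaller issue: in Step~1 you assert geometric integrality of the component through $\bar a$ \emph{before} reducing to the smooth case; this is false in general (e.g.\ $x^2+y^2=0$ over $\Rr$ through the origin) and should be asserted only after the recursion on singular loci has placed $\bar a$ in the smooth locus.
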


\subsection{Conventions}

As noted in the introduction, field topologies are assumed to be
Hausdorff and non-discrete.  We let $\Ll_{rings}$ denote the language of rings $\{+,-,\cdot,0,1\}$.  If $\Ll$ is a language and $A$ is a subset of an $\Ll$-structure, then $\Ll(A)$ denotes the language obtained by adding the elements of $A$ as new constant symbols.  If $X,Y$ are subsets of a ring, then
\begin{gather*}
	X-Y = \{x - y : x \in X, ~ y \in Y\} \\
	X \cdot Y = \{xy : x \in X, ~ y \in Y\} \\
	X + Y = \{x+y : x \in X, ~ y \in Y\}.
\end{gather*}
A \emph{variety} over $K$ means a
reduced separated finite-type scheme over $K$.  If $V$ is a variety
over $K$, then $V(K)$ denotes the set of $K$-rational points on $V$,
i.e., morphisms $\Spec K \to V$.  If $V$ is an affine variety over
$K$, then $K[V]$ denotes the coordinate ring of $V$, i.e., the
$K$-algebra whose spectrum is $V$.

We will need the following variant of the definition of
gt-henselianity:
\begin{lemma}\label{upgrade}
  Let $(K,\tau)$ be a gt-henselian field topology.  For any $n \ge 2$
  and neighborhood $U$ of $-1$, there is a neighborhood $V$ of $0$
  such that if $c_0,\ldots,c_{n-2} \in V$, then the polynomial $X^n +
  X^{n-1} + c_{n-2}X^{n-2} + \cdots + c_0$ has a \underline{simple}
  root in $U$.
\end{lemma}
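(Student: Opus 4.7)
The plan is to apply the implicit function theorem (condition (2) of the definition of gt-henselianity) to the polynomial
\[
F(X, c_0, \ldots, c_{n-2}) := X^n + X^{n-1} + c_{n-2}X^{n-2} + \cdots + c_0,
\]
regarding $X$ as the dependent variable and $\bc = (c_0, \ldots, c_{n-2})$ as the parameters. Since condition (2) is stated at the origin, I first translate via $X = Y - 1$, letting $\tilde{F}(Y, \bc) := F(Y - 1, \bc)$. Then $\tilde{F}(0, \bar{0}) = (-1)^n + (-1)^{n-1} = 0$, and a direct computation gives
\[
\frac{\partial \tilde{F}}{\partial Y}(0, \bar{0}) = n(-1)^{n-1} + (n-1)(-1)^{n-2} = (-1)^{n-1},
\]
which is nonzero in any field, regardless of characteristic.

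Applying condition (2) yields $\tau$-neighborhoods $V_0 \subseteq K^{n-1}$ of $\bar{0}$ and $U_0 \subseteq K$ of $0$, together with a $\tau$-continuous function $g_0 : V_0 \to U_0$ whose graph is precisely the zero set of $\tilde{F}$ in $V_0 \times U_0$. Translating back, $g(\bc) := g_0(\bc) - 1$ is a $\tau$-continuous root of $F(\cdot, \bc)$ taking values in the neighborhood $U_0 - 1$ of $-1$. Since $U_0 - 1$ and $U$ are both neighborhoods of $-1$, I may shrink $U_0$ to arrange $U_0 - 1 \subseteq U$, so that $g(\bc) \in U$ for every $\bc \in V_0$.

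To upgrade the root to a \emph{simple} root, consider the map $h : \bc \mapsto (\partial F/\partial X)(g(\bc), \bc)$. This is a composition of $\tau$-continuous maps, since polynomial evaluation is $\tau$-continuous for any field topology. At $\bc = \bar{0}$ one has $h(\bar{0}) = (-1)^{n-1} \neq 0$, and $K \setminus \{0\}$ is open by Hausdorffness of $\tau$. Hence $V := h^{-1}(K \setminus \{0\}) \cap V_0$ is an open neighborhood of $\bar{0}$ on which $g(\bc)$ is a simple root of $F(\cdot, \bc)$ lying in $U$. This $V$ is the desired neighborhood.

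There is no genuine obstacle in this argument; the main points to keep in mind are that the hypothesis of condition (2) forces the base point to sit at the origin (requiring the shift $X \mapsto Y-1$), and that simplicity is obtained by pushing the nonvanishing of $\partial F/\partial X$ at the base point out to a neighborhood via continuity. The characteristic of $K$ plays no role because $(-1)^{n-1}$ is a unit in every field.
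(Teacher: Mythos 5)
Your proof is correct, and it reaches the conclusion by a genuinely different route from the paper. The paper works with the root-existence characterization of gt-henselianity (condition (4) in the introduction): it observes that $g=\partial f/\partial X$ satisfies $g(-1,\bar 0)=\pm 1$, uses joint continuity of the polynomial $g$ to find a \emph{product} neighborhood $U_0\times V_0^{n-1}$ of $(-1,\bar 0)$ on which $g\ne 0$, and then applies condition (4) to produce a root in $U\cap U_0$, which is automatically simple. You instead invoke the implicit function theorem (condition (2)) after the shift $X=Y-1$, obtaining a $\tau$-continuous root function $g(\bc)$, and then propagate the nonvanishing of $\partial F/\partial X$ along the graph of $g$ using continuity and the fact that $K\setminus\{0\}$ is open (Hausdorffness gives T1, so this is fine). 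The shared key point is the computation $n(-1)^{n-1}+(n-1)(-1)^{n-2}=(-1)^{n-1}\ne 0$ in any characteristic; the difference is only in which equivalent form of gt-henselianity does the work. Your version proves slightly more (a continuous choice of simple root), at the cost of invoking the heavier condition (2); the paper's version is more economical in that it only needs continuity of ring operations plus bare root existence. One small point to tighten: when you ``shrink $U_0$ to arrange $U_0-1\subseteq U$,'' you must correspondingly shrink the parameter neighborhood, e.g.\ replace $V_0$ by $V_0\cap g_0^{-1}(U_0')$; this is immediate from continuity of $g_0$ but should be said, since the implicit function theorem's conclusion is tied to the specific pair $(V_0,U_0)$.
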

This is the same as the definition of gt-henselianity in \cite[Definition~8.1]{field-top-2}, except that the
word ``simple'' is added.
\begin{proof}
  Let $f$ and $g$ be the following polynomials:
  \begin{align*}
    f(X,Z_0,\ldots,Z_{n-2}) &= X^n + X^{n-1} + Z_{n-2}X^{n-2} + \cdots + Z_1X + Z_0 \\
    g(X,Z_0,\ldots,Z_{n-2}) &= \frac{\partial}{\partial X}f \\ &= nX^{n-1} + (n-1)X^{n-2} + (n-2)Z_{n-2}X^{n-3} + \cdots + 2Z_2 X + Z_1.
  \end{align*}
  Note that $g(-1,0,0,\ldots,0) = n(-1)^{n-1} + (n-1)(-1)^{n-2} = \pm
  1$.  By continuity of the ring operations, there are neighborhoods
  $U_0$ of $-1$ and $V_0$ of $0$ such that if $a \in U_0$ and
  $c_0,\ldots,c_{n-2} \in V_0$, then
  \begin{equation*}
    g(a,c_0,\ldots,c_{n-2}) \ne 0.
  \end{equation*}
  Given a neighborhood $U$ of $-1$, gt-henselianity gives a
  neighborhood $V_1$ of 0 such that if $c_0,\ldots,c_{n-2} \in V_1$,
  then the polynomial $f(X,c_0,\ldots,c_{n-2}) = X^n + X^{n-1} +
  c_{n-2}X^{n-2} + \cdots + c_0$ has a root $a$ in $U \cap U_0$.  Take
  $V = V_1 \cap V_0$.  Suppose $c_0,\ldots,c_{n-2} \in V$.  Then
  $f(X,c_0,\ldots,c_{n-2})$ has a root $a \in U \cap U_0$.  Since $a
  \in U_0$ and $c_0,\ldots,c_{n-2} \in V_0$, it follows that
  \begin{equation*}
    f(a,c_0,\ldots,c_{n-2}) = 0 \ne g(a,c_0,\ldots,c_{n-2}).
  \end{equation*}
  Since $g(X,c_0,\ldots,c_{n-2})$ is the derivative of
  $f(X,c_0,\ldots,c_{n-2})$, it follows that $a$ is a simple root of
  $f(X,c_0,\ldots,c_{n-2})$.
\end{proof}

\section{Suitable sequences}
For the remainder of the paper, fix a countable large field $K$.
Recall that a field topology on $K$ is determined by specifying a
neighborhood basis of 0.
\begin{fact} \label{ofcourse}
  Let $\tau$ be a non-empty family of subsets of $K$.  Then $\tau$ is
  a neighborhood basis for a gt-henselian topology on $K$ if and only
  if the following conditions hold:
  \begin{enumerate}
  \item \label{pz1} For any $U, V \in \tau$, there is $W \in \tau$ with $W
    \subseteq U \cap V$.
  \item For any $U \in \tau$, we have $U \supsetneq \{0\}$.
  \item For any $U \in \tau$, there is $V \in \tau$ with $V - V
    \subseteq U$.
  \item For any $U \in \tau$, there is $V \in \tau$ with $V \cdot V
    \subseteq U$.
  \item There is $U \in \tau$ with $-1 \notin U$.
  \item For any $a \in K^\times$ and $U \in \tau$, there is $V \in
    \tau$ with $a \cdot V \subseteq U$.
  \item \label{pzom} For any $U \in \tau$, there is $V \in \tau$ with $(1 + V)^{-1}
    \subseteq 1 + U$.
  \item \label{gt} For any $n \ge 2$ and any $U \in \tau$, there is $V \in \tau$
    such that if $c_0, \ldots, c_{n-2} \in V$ then the polynomial $X^n
    + X^{n-1} + c_{n-2}X^{n-2} + \cdots + c_1X + c_0$ has a simple
    root in $-1 + U$.
  \end{enumerate}
\end{fact}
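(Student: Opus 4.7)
The plan is to recognize conditions (1)--(7) as the standard Prestel-Ziegler axiomatization of neighborhood bases of $0$ for a Hausdorff non-discrete field topology \cite[\S 1]{PZ}, and condition (8) as the upgrade of the gt-henselianity axiom recorded in Lemma~\ref{upgrade}. Both directions then reduce to matching up the two lists; no substantially new content is needed.

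For the forward direction, assume $\tau$ is a neighborhood basis of $0$ for a gt-henselian topology on $K$. Condition (1) is the defining property of a neighborhood basis. Conditions (3), (4), (6), (7) encode continuity---at the relevant point---of subtraction, multiplication, scalar multiplication by $a$, and $x \mapsto (1+x)^{-1}$ respectively, all of which hold in any field topology. Condition (2) is non-discreteness, and condition (5) follows from Hausdorffness, which in particular forces some neighborhood of $0$ to omit $-1$. Finally, condition (8) is precisely Lemma~\ref{upgrade}.

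For the reverse direction, I would invoke the Prestel-Ziegler characterization: conditions (1), (3), (4), (6), (7) are sufficient for $\tau$ to be a neighborhood basis at $0$ for a unique topology on $K$ under which the ring operations are continuous, with the neighborhoods of $a \in K$ being the sets $a + W$ for $W$ a superset of some element of $\tau$. Condition (2) then makes this topology non-discrete. Together, conditions (3), (5), (6) upgrade it to Hausdorff: given $a \ne 0$, applying (6) to $-a^{-1}$ and the $U$ from (5) yields $V \in \tau$ with $a \notin V$, and then (3) produces $V' \in \tau$ with $V' - V' \subseteq V$, which separates $a$ from $0$. Finally, any neighborhood of $-1$ has the form $-1 + U$ for some neighborhood $U$ of $0$, so condition (8) immediately implies (and is in fact strictly stronger than) the fourth characterization of gt-henselianity recalled in the introduction, making $\tau$ gt-henselian.

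No genuine obstacle arises: the statement is essentially a restatement of the classical Prestel-Ziegler conditions with the gt-henselianity clause bolted on, and the only piece doing any real work---upgrading ``root'' to ``simple root'' in condition (8)---has already been handled by Lemma~\ref{upgrade}.
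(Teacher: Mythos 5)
Your proposal is correct and matches the paper's proof, which likewise disposes of conditions (1)--(7) by citing the Prestel--Ziegler axiomatization of neighborhood bases for Hausdorff non-discrete field topologies and handles condition (8) via Lemma~\ref{upgrade}. The extra details you supply (e.g., deriving Hausdorffness from (3), (5), (6)) are accurate but are exactly the content the paper delegates to \cite[Introduction]{PZ}.
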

\begin{proof}
  Conditions~(\ref{pz1})--(\ref{pzom}) express that $\tau$ is a
  neighborhood basis for a field topology; see
  \cite[Introduction]{PZ}.  The last condition is just
  Lemma~\ref{upgrade}.
\end{proof}
For the remainder of the paper,
fix an enumeration $\{a_i\}_{i \in \Nn}$ of the elements of $K$ such
that each element appears infinitely many times.  Also fix a sequence
$\{n_i\}_{i \in \Nn}$ which lists each element of $\{2,3,4,5,\ldots\}$
infinitely many times.
\begin{definition}
  Let $R$ be an integral $K$-algebra.  Let $A, B$ be subsets of $R$.
  Then $C_i(A,B)$ is the conjunction of the following statements:
  \begin{enumerate}
  \item $0 \in A$.
  \item $B \subseteq A$.
  \item $B - B \subseteq A$.
  \item $B \cdot B \subseteq A$.
  \item $a_i \cdot B \subseteq A$.
  \item $-1 \notin B$, and $(1+B)^{-1} \subseteq 1 + A$.
  \item If $c_0,\ldots,c_{n_i - 2} \in B$, then the polynomial
    \begin{equation*}
      X^{n_i} + X^{n_i - 1} + c_{n_i - 2}X^{n_i - 2} + \cdots + c_1 X^1 + c_0
    \end{equation*}
    has at least one simple root in the set $-1 + A$.
  \end{enumerate}
\end{definition}
\begin{remark} \label{easy-props}
  The following properties of $C_i$ can be checked by inspection:
  \begin{enumerate}
  \item \label{zero} $C_i(\{0\},\{0\})$ is true.
  \item $C_i(A,B)$ depends positively on $A$ and negatively on $B$.
    In other words,
    \begin{equation*}
      (C_i(A,B) \text{ and } A' \supseteq A \text{ and } B' \subseteq
      B) \implies C_i(A',B').
    \end{equation*}
\item \label{uni} If $A^0 \subseteq A^1 \subseteq \cdots$ is an increasing
  sequence and $B^0 \subseteq B^1 \subseteq \cdots$ is an increasing
  sequence and $C_i(A^j,B^j)$ holds for each $j$, then
  $C_i\left(\bigcup_{j=0}^\infty A^j, \bigcup_{j=0}^\infty B^j\right)$
  holds.
  \end{enumerate}
\end{remark}
\begin{definition}
  Suppose $R$ is an integral domain extending $K$ and $\{A_i\}_{i \in
    \Nn}$ is a sequence of subsets of $R$.  Then $\{A_i\}_{i \in \Nn}$
  is \emph{suitable} if $C_i(A_i,A_{i+1})$ holds for each $i$.
\end{definition}
For example,
\begin{itemize}
\item $A_i \supseteq A_{i+1}$ holds for each $i$, so the sequence is
  decreasing.
\item $A_i \supseteq A_{i+1} - A_{i+1}$ for each $i$.
\item $-1 \notin A_{i+1}$ for $i \ge 0$.
\item $0 \in A_i$ for $i \ge 0$.
\item $A_i \supseteq a_i A_{i+1}$.
\end{itemize}
and so on.
\begin{remark} \label{zero2}
  The sequence $(\{0\},\{0\},\ldots)$ is suitable, by
  Remark~\ref{easy-props}(\ref{zero}).
\end{remark}
The significance of suitable sequences is that they give
gt-henselian topologies, when the sets are sufficiently big:
\begin{lemma} \label{obvious}
  Let $\{A_i\}_{i \in \Nn}$ be a suitable sequence of subsets of
  $K$.  Then one of the following holds:
  \begin{itemize}
  \item $A_i = \{0\}$ for all sufficiently large $i$.
  \item The family $\{A_i : i \in \Nn\}$ is a neighborhood basis for a
    gt-henselian topology on $K$.
  \end{itemize}
\end{lemma}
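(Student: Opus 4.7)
The plan is to verify the conditions of Fact~\ref{ofcourse} assuming the first alternative fails. First I would establish the dichotomy: the suitability condition $C_i(A_i, A_{i+1})$ forces $A_{i+1} \subseteq A_i$ (and $0 \in A_{i+1}$), so the sequence is decreasing and contains $0$. Hence if $A_j = \{0\}$ for some $j$, then $A_i = \{0\}$ for every $i \ge j$, and we are in the first case. Otherwise $A_i \supsetneq \{0\}$ for every $i$, which already gives condition~(2) of Fact~\ref{ofcourse}, and we aim to show the remaining conditions hold.

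Next I would run through conditions (1),(3),(4),(5),(7) of Fact~\ref{ofcourse}: each is a direct translation of a clause of $C_i(A_i,A_{i+1})$ together with the fact that the sequence is decreasing. Explicitly, (1) follows by taking $W = A_{\max(i,j)}$ when $U = A_i$ and $V = A_j$; (3) and (4) follow from $A_{i+1} - A_{i+1} \subseteq A_i$ and $A_{i+1}\cdot A_{i+1}\subseteq A_i$; (5) holds with $U = A_1$ since $-1 \notin A_1$ by clause~(6) of $C_0$; and (7) is literal from the same clause.

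The two conditions that use the prescribed enumerations are (6) and (8). For (6), given $U = A_i$ and $a \in K^\times$, choose $j \ge i$ with $a_j = a$, which is possible because $\{a_i\}$ enumerates every element of $K$ infinitely often; then clause~(5) of $C_j$ gives $a \cdot A_{j+1} = a_j \cdot A_{j+1} \subseteq A_j \subseteq A_i$, so $V = A_{j+1}$ works. For (8), given $n \ge 2$ and $U = A_i$, choose $j \ge i$ with $n_j = n$, which is possible because $\{n_i\}$ lists each integer $\ge 2$ infinitely often; then the last clause of $C_j$ says that for $c_0,\ldots,c_{n-2} \in A_{j+1}$ the relevant polynomial has a simple root in $-1 + A_j \subseteq -1 + A_i = -1 + U$, so $V = A_{j+1}$ again suffices.

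There is no real obstacle here: the whole content of the lemma is already packaged into the definition of $C_i$ and into the assumption that the enumerations $\{a_i\}$ and $\{n_i\}$ are cofinal (in fact infinitely repeating). The only point requiring a moment's care is ensuring we are allowed to choose the index $j$ to be $\ge i$ in (6) and (8); this is exactly why the enumerations were required to hit each element \emph{infinitely often} rather than merely once.
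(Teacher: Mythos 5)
Your proposal is correct and follows essentially the same route as the paper: establish the dichotomy from the decreasing property, then check the eight conditions of Fact~\ref{ofcourse} clause by clause, using the infinite repetition in the enumerations $\{a_i\}$ and $\{n_i\}$ to handle conditions (6) and (8). No gaps.
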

The proof is probably obvious, but we give the details for
completeness.
\begin{proof}
  Each $A_i \supseteq \{0\}$.  If some $A_i = \{0\}$, then $A_i =
  A_{i+1} = \cdots = \{0\}$, since the sequence is decreasing, and
  then the first point holds.  Therefore, we may assume no $A_i$
  equals $\{0\}$.

  We show that the conditions of Fact~\ref{ofcourse} hold.
  \begin{enumerate}
  \item The sequence $A_0, A_1, A_2, \ldots$ is decreasing, so the set
    $\{A_i : i \in \Nn\}$ is downward directed.
  \item By assumption, each $A_i \supsetneq \{0\}$.  Otherwise, we are
    in the first case.
  \item For each $i$, $A_i \supseteq A_{i+1} - A_{i+1}$ by definition
    of $C_i$.
  \item Similarly, $A_i \supseteq A_{i+1} \cdot A_{i+1}$.
  \item By definition of $C_0$, the condition $C_0(A_0,A_1)$ implies
    $-1 \notin A_1$.
  \item Fix some $a \in K^\times$ and some $i$.  We must find some $j$
    such that $a \cdot A_j \subseteq A_i$.  Increasing $i$, we may
    suppose $a_i = a$.  Then $C_i(A_i,A_{i+1})$ implies $a_i \cdot
    A_{i+1} \subseteq A_i$, so we may take $j = i+1$.
  \item $C_i(A_i,A_{i+1})$ implies $(1 + A_{i+1})^{-1} \subseteq 1 +
    A_i$.
  \item Fix some $n \ge 2$ and $i$.  We must find some $j$ such that
    if $c_0,\ldots,c_{n-2} \in A_j$, then the polynomial $X^n +
    X^{n-1} + c_{n-2}X^{n-2} + \cdots + c_0$ has a simple root in $-1
    + A_i$.  Increasing $i$, we may suppose $n_i = n$.  Then
    $C_i(A_i,A_{i+1})$ shows that we can take $j = i+1$.  \qedhere
  \end{enumerate}
\end{proof}
\begin{lemma} \label{union-duh}
  Let $A^j_i$ be a family of subsets of $K$ such that\ldots
  \begin{itemize}
  \item For fixed $j$, the sequence $\{A^j_i\}_{i \in \Nn}$ is suitable.
  \item For fixed $i$, the sequence $\{A^j_i\}_{j \in \Nn}$ is
    increasing: $A_i^0 \subseteq A_i^1 \subseteq A_i^2 \subseteq
    \cdots$.
  \end{itemize}
  Let $A_i^\omega = \bigcup_{j=0}^\infty A_i^j$.  Then the sequence
  $\{A^\omega_i\}_{i \in \Nn}$ is suitable.
\end{lemma}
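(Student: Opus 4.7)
The plan is to apply Remark~\ref{easy-props}(\ref{uni}) directly, and the only real work is to unwind the definitions so that the hypotheses of that remark line up with what we have here. Fix $i \in \Nn$; we must show $C_i(A^\omega_i, A^\omega_{i+1})$ holds, which is exactly the statement that the limit sequence is suitable at index $i$.

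To set up Remark~\ref{easy-props}(\ref{uni}) with ``$A$'' playing the role of $A^\omega_i$ and ``$B$'' playing the role of $A^\omega_{i+1}$, I would introduce two auxiliary increasing chains indexed by $j$: namely $A^j := A^j_i$ and $B^j := A^j_{i+1}$. The assumption that $\{A^j_i\}_{j \in \Nn}$ is increasing in $j$ for each fixed $i$ gives $A^0 \subseteq A^1 \subseteq \cdots$ and $B^0 \subseteq B^1 \subseteq \cdots$. The assumption that each row $\{A^j_i\}_{i \in \Nn}$ is suitable gives $C_i(A^j, B^j) = C_i(A^j_i, A^j_{i+1})$ for every $j$.

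Now Remark~\ref{easy-props}(\ref{uni}) applies and yields $C_i\left(\bigcup_{j=0}^\infty A^j,\ \bigcup_{j=0}^\infty B^j\right)$, which by definition is $C_i(A^\omega_i, A^\omega_{i+1})$. Since $i$ was arbitrary, the sequence $\{A^\omega_i\}_{i \in \Nn}$ is suitable.

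There is essentially no obstacle: the lemma is a formal consequence of the ``monotone union'' property of the conditions $C_i$ already recorded in Remark~\ref{easy-props}(\ref{uni}). The only thing to be careful about is that the index $i$ is fixed while we take the union over $j$, so that the two chains fed into the remark are indexed by the same parameter $j$; this is guaranteed by the hypothesis that each column of the array is increasing.
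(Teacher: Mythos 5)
Your proof is correct and is exactly the paper's argument: the paper's proof of this lemma is a one-line appeal to Remark~\ref{easy-props}(\ref{uni}), and your write-up just spells out the (correct) bookkeeping of which unions play the roles of $A$ and $B$ for each fixed $i$.
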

\begin{proof}
  Immediate from Remark~\ref{easy-props}(\ref{uni}).
\end{proof}
As in \cite{kiltinen,podewski}, our strategy will be to build an
increasing sequence of suitable sequences $A_\bullet^0 \subseteq
A_\bullet^1 \subseteq A_\bullet^2 \subseteq \cdots$, such that each
$A_\bullet^j$ falls into the first case of Lemma~\ref{obvious}, but
the union falls into the second case, giving a gt-henselian topology
on $K$.

\section{Adding finitely many new elements}
\begin{remark} \label{forge}
  There is a local $K$-algebra $(R,\mm)$ with the following properties:
\begin{itemize}
\item $R$ is an integral domain, but not a field, so that $\mm
  \supsetneq \{0\}$.
\item $K$ is the residue field of $R$, so that $R = K \oplus \mm$.
\item $K$ is existentially closed in $R$.
\item $R$ is henselian.
\end{itemize}
For example, we could take $R = K[[t]]$ or the henselization $K[t]_{(t)}^{\hens}$ of the local ring $K[t]_{(t)}$.
The field $K$ is existentially closed in both these rings because it's
existentially closed in the larger ring $K((t)) \supseteq K[[t]] \supseteq K[t]_{(t)}^\hens \supseteq K$ by
Fact~\ref{Kuhlmann}.
\end{remark}
Fix an $(R,\mm)$ as in Remark~\ref{forge}.
If $A \subseteq K$, note that $A + \mm$ is the set of elements of $R$
with residue in $A$.  Let $\res : R \to K$ denote the residue map.

The next lemma is the core of the proof of the main theorem:
\begin{lemma} \label{core}
  Suppose $A, B$ are finite subsets of $K$ and $C_i(A,B)$ holds.
  Suppose $B'$ is a finite subset of $R$ with $B \subseteq B'
  \subseteq B + \mm$.  Then there is a finite subset $A' \subseteq R$
  with $A \subseteq A' \subseteq A + \mm$ and $C_i(A',B')$.
\end{lemma}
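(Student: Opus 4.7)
The plan is to build $A'$ explicitly as $A$ together with the finitely many extra elements that the clauses of $C_i(A',B')$ force. Concretely, I would set
\[
A' = A \cup B' \cup (B'-B') \cup (B'\cdot B') \cup (a_i\cdot B') \cup S \cup T,
\]
where $S = \{(1+b')^{-1}-1 : b'\in B'\}$ and $T$ is a set of Hensel lifts constructed below. Finiteness of $A'$ and the containment $A \subseteq A'$ are immediate, and clauses~1--6 of $C_i(A',B')$ hold by construction. For the ``$-1 \notin B'$'' half of clause~6, note that since $B'\subseteq B+\mm$ and $-1\notin B$ (by $C_i(A,B)$), the residue of each $1+b'$ is nonzero, so $1+b'$ is a unit in the local ring $R$ and $S$ is well-defined.

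Checking $A' \subseteq A+\mm$ is then direct: reduction modulo $\mm$ sends each of the pieces $B'$, $B'-B'$, $B'\cdot B'$, $a_i\cdot B'$, $S$ into $B$, $B-B$, $B\cdot B$, $a_i\cdot B$, and $(1+B)^{-1}-1$ respectively (using that $\mm$ is an ideal), and each of these sets is contained in $A$ by $C_i(A,B)$.

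The real content of the lemma is clause~7, which determines $T$. For each tuple $\bar c' = (c'_0,\ldots,c'_{n-2}) \in (B')^{n-1}$ with $n = n_i$, let $\bar c = \res \bar c' \in B^{n-1}$ and consider
\[
f_{\bar c'}(X) = X^n + X^{n-1} + c'_{n-2}X^{n-2} + \cdots + c'_0 \in R[X],
\]
with reduction $f_{\bar c} \in K[X]$. By $C_i(A,B)$, the polynomial $f_{\bar c}$ has a simple root $a_{\bar c} \in -1 + A$. Since $R$ is henselian and $f_{\bar c'}$ is monic, Hensel's lemma lifts $a_{\bar c}$ to a root $a'_{\bar c'} \in R$ of $f_{\bar c'}$, and this root remains simple because $f'_{\bar c'}(a'_{\bar c'})$ has the nonzero residue $f'_{\bar c}(a_{\bar c})$. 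I would take $T = \{1 + a'_{\bar c'} : \bar c' \in (B')^{n-1}\}$, which is finite and lies in $(1+A) + \mm \subseteq A+\mm$, so that each $a'_{\bar c'}$ sits in $-1 + A'$ as required.

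The only nontrivial step is this appeal to Hensel's lemma; the remainder is routine bookkeeping in the decomposition $R = K \oplus \mm$. Note that the existential-closure hypothesis on $K$ inside $R$ plays no role in this lemma, and is presumably needed later when one transfers the resulting gt-henselian structure on $R$ back down to $K$.
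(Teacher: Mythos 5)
Your proposal is correct and follows essentially the same route as the paper's proof: enumerate the finitely many elements forced by each clause of $C_i(A',B')$, check via the residue map that each lands in $A+\mm$, and use henselianity of $R$ to lift the simple roots guaranteed by $C_i(A,B)$ to simple roots of the lifted polynomials. Your sets $S$ and $T$ are exactly the paper's $Z_6$ and $Z_7$, and your remark that existential closure is not needed here (only henselianity) matches how the paper uses the lemma.
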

\begin{proof}
  \begin{claim} \label{flip}
    If $x \in B'$, then $1+x \in R^\times$, and $\res(1/(1+x)-1) \in
    A$.
  \end{claim}
  \begin{claimproof}
    By condition $C_i(A,B)$, we have $-1 \notin B$.  Because $B'
    \subseteq B + \mm$, we have $\res(x) \in B$ and so $\res(x) \ne
    -1$.  Then $\res(1 + x) = 1 + \res(x) \ne 0$, and so $1 + x \in
    R^\times$ and $1/(1+x)$ exists.  Also,
    \begin{equation*}
      \res(1/(1 + x) - 1) = \res(1/(1+x)) - 1 = \res(1+x)^{-1} - 1 =
      (1 + \res(x))^{-1} - 1.
    \end{equation*}
    Since $\res(x) \in B$ and $(1+B)^{-1} - 1 \subseteq A$, we have
    $\res(1/(1+x) - 1) \in A$.
  \end{claimproof}
  Define the following sets:
  \begin{enumerate}
  \item $Z_1 = \{0\}$.
  \item $Z_2 = B'$.
  \item $Z_3 = B' - B'$.
  \item $Z_4 = B' \cdot B'$.
  \item $Z_5 = a_i \cdot B'$.
  \item $Z_6 = (1 + B')^{-1} - 1$.  This makes sense by
    Claim~\ref{flip}.
  \item $Z_7$ is the set of $r \in A + \mm$ such that $-1 + r$ is a
    simple root of some polynomial of the form $X^{n_i} + X^{n_i - 1}
    + c_{n_i - 2}X^{n_i - 2} + \cdots + c_1X + c_0$ with
    $c_0,\ldots,c_{n_i - 2} \in B'$.
  \end{enumerate}
  \begin{claim}
    Each $Z_j$ is finite.
  \end{claim}
  \begin{claimproof}
    Clear, since $B'$ is finite.
  \end{claimproof}
  \begin{claim}
    Each $Z_j$ is a subset of $A + \mm$.
  \end{claim}
  \begin{claimproof}
    In other words, we must show that if $x \in Z_j$ then $\res(x) \in
    A$.
    \begin{enumerate}
    \item $Z_1$: this holds because $\res(0) = 0 \in A$ by $C_i(A,B)$.
    \item $Z_2$: if $x \in B'$, then $\res(x) \in B \subseteq A$ by
      $C_i(A,B)$.
    \item $Z_3$: if $x,y \in B'$, then $\res(x-y) = \res(x) - \res(y)
      \in B - B \subseteq A$.
    \item $Z_4$: if $x,y \in B'$, then $\res(xy) = \res(x)\res(y) \in
      B \cdot B \subseteq A$.
    \item $Z_5$: if $x \in B'$, then $\res(a_i x) = a_i \res(x) \in
      a_i B \subseteq A$.
    \item $Z_6$: if $x \in B'$, then $\res(1/(1+x)-1) \in A$ by
      Claim~\ref{flip}.
    \item $Z_7$: true by definition of $Z_7$.  \qedhere
    \end{enumerate}
  \end{claimproof}
  Take $A' = A \cup \bigcup_{j=1}^7 Z_j$.  Then $A'$ is a finite
  subset of $A + \mm$.  It remains to show that $C_i(A',B')$ holds.
  \begin{enumerate}
  \item $0 \in A'$ because $Z_1 = \{0\} \subseteq A'$.  Alternatively,
    $0 \in A'$ because $0 \in A$ by $C_i(A,B)$.
  \item $B' = Z_2 \subseteq A'$.
  \item $B' - B' = Z_3 \subseteq A'$.
  \item $B' \cdot B' = Z_4 \subseteq A'$.
  \item $a_i \cdot B' = Z_5 \subseteq A'$.
  \item $(1 + B')^{-1} - 1 = Z_6 \subseteq A'$.
  \item Finally, suppose $c_0, \ldots,c_{n_i - 2}$ are elements of
    $B'$.  Then $\res(c_0),\ldots,\res(c_{n_i - 2})$ are elements of
    $B$.  By the assumption $C_i(A,B)$, the polynomial
    \begin{equation*}
      X^{n_i} + X^{n_i - 1} + \res(c_{n_i- 2})X^{n_i - 2} + \cdots + \res(c_1)X^1 + \res(c_0)
    \end{equation*}
    has a simple root $\rho \in -1 + A$.  Because $R$ is henselian, we
    can lift $\rho$ to a simple root $r \in \rho + \mm$ of the polynomial
    \begin{equation*}
      X^{n_i} + X^{n_i - 1} + c_{n_i - 2}X^{n_i - 2} + \cdots + c_1X +
      c_0. \tag{$\ast$}
    \end{equation*}
    Then $\res(r+1) = \rho + 1 \in A$, and $-1 + (r+1)$ is a
    simple root of the polynomial ($\ast$), so $r+1 \in Z_7 \subseteq
    A'$.  Then $r \in -1 + A'$, and the polynomial ($\ast$) has at
    least one simple root in $-1 + A'$, as desired.  \qedhere
  \end{enumerate}
\end{proof}
\begin{definition}
  A suitable sequence $A_0, A_1, A_2, \ldots$ is \emph{essentially
  finite} if each $A_i$ is finite, and $A_i = \{0\}$ for $i \gg 0$.
\end{definition}
\begin{lemma} \label{iterate}
  Let $A_0, A_1, A_2, \ldots$ be an essentially finite suitable
  sequence in $K$.  Given any $n$, there is an essentially finite
  suitable sequence $A'_0, A'_1, \ldots$ in $R$ such that $A'_i
  \supseteq A_i$ for each $i$, and $A'_n \ne \{0\}$.
\end{lemma}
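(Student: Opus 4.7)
The plan is to start with the essentially finite suitable sequence $A_\bullet$ in $K$, inject a single nonzero element of $\mm$ into position $n+1$, and then propagate the change backwards through positions $n, n-1, \ldots, 0$ using Lemma~\ref{core} to produce the desired sequence in $R$.

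First I would pick any nonzero $t \in \mm$, which exists because $R$ is not a field. Set $A'_i = A_i$ for $i > n+1$, and set $A'_{n+1} = A_{n+1} \cup \{t\}$. Since $0 \in A_{n+1}$ and $t \in \mm$, this finite set satisfies $A_{n+1} \subseteq A'_{n+1} \subseteq A_{n+1} + \mm$, which is precisely the hypothesis of Lemma~\ref{core}.

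Next I would apply Lemma~\ref{core} repeatedly, working downwards through $k = n, n-1, \ldots, 0$. At step $k$ I apply the lemma with $A = A_k$, $B = A_{k+1}$, $B' = A'_{k+1}$ (inductively already constructed) to produce a finite $A'_k$ with $A_k \subseteq A'_k \subseteq A_k + \mm$ and $C_k(A'_k, A'_{k+1})$. The containment $A'_k \subseteq A_k + \mm$ output by the lemma is exactly the input hypothesis required at the next step $k-1$, so the downward induction runs smoothly. Moreover, inspecting the construction in the proof of Lemma~\ref{core} (where $A' = A \cup \bigcup_j Z_j$ and $Z_2 = B'$) shows $A'_k \supseteq A'_{k+1}$ for every $k \le n$. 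In particular $t \in A'_{n+1} \subseteq A'_n$, so $A'_n \ne \{0\}$ as required.

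Finally I would verify suitability at the higher indices and essential finiteness. For $i \ge n+2$, we have $A'_i = A_i$ and $A'_{i+1} = A_{i+1}$, so $C_i(A'_i, A'_{i+1})$ is simply the original suitability of $A_\bullet$. For $i = n+1$, we have $A'_{n+1} \supseteq A_{n+1}$ while $A'_{n+2} = A_{n+2}$, so $C_{n+1}(A'_{n+1}, A'_{n+2})$ follows from $C_{n+1}(A_{n+1}, A_{n+2})$ by the monotonicity recorded in Remark~\ref{easy-props}(\ref{uni})'s preceding item. Each $A'_i$ is finite (by Lemma~\ref{core} for $i \le n+1$, or by essential finiteness of $A_\bullet$ for $i > n+1$), and $A'_i = A_i = \{0\}$ for $i \gg 0$. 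There is no real obstacle: Lemma~\ref{core} was designed so that its conclusion is exactly strong enough to feed into its next invocation, and the forward-direction conditions at indices $\ge n+1$ are preserved for free by the monotonicity of $C_i$ in its first argument.
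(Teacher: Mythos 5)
Your proof is correct and follows essentially the same route as the paper: inject a single nonzero seed $t \in \mm$ at one position and propagate backwards with Lemma~\ref{core}. The only (immaterial) difference is that you place the seed at index $n+1$ and recover $t \in A'_n$ from the containment $A'_{n+1} \subseteq A'_n$ (which indeed follows already from condition $C_n(A'_n,A'_{n+1})$, no need to inspect the internals of Lemma~\ref{core}), whereas the paper sets $A'_n = A_n \cup \{t\}$ directly.
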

\begin{proof}
  Choose some non-zero ``seed'' $t \in \mm$.  Let $A'_i = A_i$ for $i
  > n$.  Let $A'_n = A_n \cup \{t\}$.  Then $C_i(A'_i,A'_{i+1})$ holds
  for all $i \ge n$.  Moreover, $A'_n$ is finite and $A_n \subseteq
  A'_n \subseteq A_n + \mm$ (because $\res(t) = 0 \in A_n$).

  Build $A'_{n-1}, A'_{n-2}, \ldots, A'_0$ recursively using
  Lemma~\ref{core}, ensuring that
  \begin{itemize}
  \item Each $A'_i$ is finite
  \item $A_i \subseteq A'_i \subseteq A_i + \mm$.
  \item $C_i(A_i,A_{i+1})$.
  \end{itemize}
  Then we are done.
\end{proof}
\begin{lemma} \label{ec-trick}
  Let $A_0, A_1, A_2, \ldots$ be an essentially finite suitable
  sequence in $K$.  Given any $n$, there is an essentially finite
  suitable sequence $A'_0, A'_1, \ldots$ in $K$ such that $A'_i
  \supseteq A_i$ for each $i$, and $A'_n \supsetneq A_n$.  
\end{lemma}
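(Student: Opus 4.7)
The plan is to push the construction into $R$ using Lemma~\ref{iterate}, then use the fact that $K$ is existentially closed in $R$ to pull it back to $K$. First, apply Lemma~\ref{iterate} to produce an essentially finite suitable sequence $A''_0, A''_1, \ldots$ in $R$ with $A''_i \supseteq A_i$ for every $i$ and $A''_n \ne \{0\}$. Inspecting the proof of Lemma~\ref{iterate}, the strict inclusion $A''_n \supsetneq A_n$ actually holds: the extra element $t$ chosen there is a nonzero element of $\mm$, so in particular $t \notin K \supseteq A_n$. Choose $N$ large enough that $A''_i = \{0\}$ for all $i > N$, and set $m_i = |A''_i|$ for $i \le N$.

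Next, I claim the existence of an essentially finite suitable sequence $A'_\bullet$ in $K$ with $A'_i \supseteq A_i$ and $A'_n \supsetneq A_n$ can be encoded as a single existential sentence $\phi$ in $\Ll_{rings}(K)$. Introduce variables $y_{i,j}$ for $0 \le i \le N$ and $1 \le j \le m_i$, to be interpreted so that $A'_i = \{y_{i,j} : 1 \le j \le m_i\}$ (and $A'_i = \{0\}$ for $i > N$). Each clause of $C_i(A'_i,A'_{i+1})$ is expressible existentially with parameters in $K$: containments ``$X \subseteq Y$'' become finite disjunctions of equalities among the $y_{i,j}$ and elements of $A_i$; the condition $-1 \notin A'_{i+1}$ is a finite conjunction of inequalities; invertibility in $(1+A'_{i+1})^{-1} \subseteq 1 + A'_i$ is introduced by existentially quantifying auxiliary inverses $z_{i+1,j}$ with $(1 + y_{i+1,j})z_{i+1,j} = 1$ and $z_{i+1,j} - 1 \in A'_i$; and the simple-root condition is handled by introducing, for each tuple $(c_0,\ldots,c_{n_i-2}) \in (A'_{i+1})^{n_i - 1}$, an existentially quantified root $\rho$ with $f(\rho) = 0$, $f'(\rho) \ne 0$, and $\rho + 1 \in A'_i$. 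Finally $A'_n \supsetneq A_n$ is expressed by adding the clause $\bigwedge_{a \in A_n} y_{n,1} \ne a$, possibly after renumbering so that $y_{n,1}$ is the coordinate corresponding to the seed $t$.

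The sentence $\phi$ is thus an existential sentence in $\Ll_{rings}(K)$, and it holds in $R$ by interpreting $y_{i,j}$ as the elements of $A''_i$ (and $z_{i,j}, \rho$ as the appropriate inverses and Henselian lifts in $R$). Since $K$ is existentially closed in $R$ by choice of $R$, the sentence $\phi$ also holds in $K$, and the resulting witnesses define finite sets $A'_0,\ldots,A'_N \subseteq K$ with $A_i \subseteq A'_i$, $A'_n \supsetneq A_n$, and $C_i(A'_i,A'_{i+1})$ for all $i$; extending by $A'_i = \{0\}$ for $i > N$ yields the desired essentially finite suitable sequence. The main point of care is the bookkeeping in step two, verifying that each clause of $C_i$ — and in particular the simple-root clause and the invertibility clause — really does collapse to an existential formula over $K$ once the sets $A'_i$ are parameterized by finite tuples of variables.
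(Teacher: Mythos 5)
Your proposal is correct and follows essentially the same route as the paper: push into $R$ via Lemma~\ref{iterate}, encode the finite suitable sequence and all clauses of the $C_i$ as a single existential $\Ll_{rings}(K)$-sentence, and transfer back to $K$ by existential closedness. If anything you are slightly more careful than the paper on the final point, since you explicitly enforce $A'_n \supsetneq A_n$ by requiring the witness corresponding to the seed $t$ to differ from every element of $A_n$ (the paper's displayed formula only enforces $\{0\} \subsetneq A'_n$).
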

This is identical to Lemma~\ref{iterate} but with $R$ replaced by $K$.
\begin{proof}
  This follows directly from Lemma~\ref{iterate} because $K$ is
  existentially closed in $R$, and the sets involved are finite.

  In more detail, first let $\{A''_i\}_{i \in \Nn}$ be a suitable
  sequence in $R$ produced by Lemma~\ref{iterate}.  Let $m$ be so large that
  \begin{itemize}
  \item $|A''_i| \le m$ for every $i$.
  \item $A''_i = \{0\}$ for $i > m$.
  \end{itemize}
  Note the following:
  \begin{claim}
    For any finite $S \subseteq K$, there are quantifier-free
    $\mathcal{L}_{rings}(K)$-formulas $\theta_S(x_1,\ldots,x_m)$ and
    $\theta'_S(x_1,\ldots,x_m)$ expressing the conditions
    \begin{gather*}
      S \subseteq \{x_1,\ldots,x_m\} \\
      S \subsetneq \{x_1,\ldots,x_m\}.
    \end{gather*}
    respectively.
  \end{claim}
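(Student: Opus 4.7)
The plan is to write the desired formulas down explicitly, exploiting the finiteness of $S$. Enumerate $S = \{s_1,\ldots,s_k\}$ with each $s_j \in K$; each $s_j$ is thus a constant symbol of $\mathcal{L}_{rings}(K)$.

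First I would set
\[ \theta_S(x_1,\ldots,x_m) \;:=\; \bigwedge_{j=1}^{k} \bigvee_{i=1}^{m} (x_i = s_j), \]
which literally expresses that every $s_j \in S$ equals some $x_i$, i.e.\ $S \subseteq \{x_1,\ldots,x_m\}$. Because the conjunction and disjunction are finite, this is a finite quantifier-free $\mathcal{L}_{rings}(K)$-formula.

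Next, $S \subsetneq \{x_1,\ldots,x_m\}$ holds if and only if $S \subseteq \{x_1,\ldots,x_m\}$ and additionally some $x_i$ lies outside $S$. I would therefore take
\[ \theta'_S(x_1,\ldots,x_m) \;:=\; \theta_S(x_1,\ldots,x_m) \,\wedge\, \bigvee_{i=1}^{m} \bigwedge_{j=1}^{k} (x_i \ne s_j), \]
which is again a finite quantifier-free $\mathcal{L}_{rings}(K)$-formula.

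There is no substantive obstacle; the claim is a bookkeeping observation, and its only content is that the finiteness of $S$ keeps the conjunctions and disjunctions finite, so the resulting formulas are genuine (finitary) first-order formulas. The edge case $S = \varnothing$ is handled by the usual conventions that an empty conjunction is $\top$ and an empty disjunction is $\bot$.
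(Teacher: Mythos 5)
Your proposal is correct and takes essentially the same approach as the paper: both write the subset condition as $\bigwedge_{s\in S}\bigvee_{i}(x_i=s)$ and express properness by adding that some $x_i$ lies outside $S$ (your $\bigvee_i\bigwedge_j(x_i\ne s_j)$ is just the De Morgan form of the paper's $\neg\bigwedge_i\bigvee_j(x_i=s)$).
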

  \begin{claimproof}
    For example, we can express the second condition as
    \begin{gather*}
      \bigwedge_{s \in S} \bigvee_{i = 1}^m (s = x_i) \\
      \wedge \neg \bigwedge_{i=1}^m \bigvee_{s \in S} (x_i = s).
    \end{gather*}
    (The first line says $S \subseteq \{x_1,\ldots,x_m\}$ and the
    second says $\{x_1,\ldots,x_m\} \not \subseteq S$.)
  \end{claimproof}
  \begin{claim}
    For any $i$, there is a quantifier-free
    $\mathcal{L}_{rings}(K)$-formula
    $\phi_i(x_1,\ldots,x_m;y_1,\ldots,y_m)$ which expresses the
    condition $C_i(\{x_1,\ldots,x_m\},\{y_1,\ldots,y_m\})$ is any
    integral $K$-algebra.
  \end{claim}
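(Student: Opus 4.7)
The formula $\phi_i$ will be the conjunction of seven subformulas $\phi_i^{(1)},\ldots,\phi_i^{(7)}$, one for each clause in the definition of $C_i(A,B)$, each asserting the corresponding clause in an integral $K$-algebra $R$ with $A = \{x_1,\ldots,x_m\}$ and $B = \{y_1,\ldots,y_m\}$. The plan is simply to write each clause down as a finite Boolean combination of polynomial equations and inequations in $x_1,\ldots,x_m,y_1,\ldots,y_m$ over $\mathcal{L}_{rings}(K)$.

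For clauses (1)--(5) the translations are routine. For instance, ``$0 \in A$'' becomes $\bigvee_{k=1}^m (x_k = 0)$, ``$B \subseteq A$'' becomes $\bigwedge_{j=1}^m \bigvee_{k=1}^m (y_j = x_k)$, ``$B - B \subseteq A$'' becomes $\bigwedge_{j,j'=1}^m \bigvee_{k=1}^m (y_j - y_{j'} = x_k)$, and analogously for $B \cdot B \subseteq A$ and $a_i \cdot B \subseteq A$; the constant $a_i \in K$ is a legal constant symbol of $\mathcal{L}_{rings}(K)$.

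Clause (6), ``$-1 \notin B$ and $(1+B)^{-1} \subseteq 1+A$'', is the only one requiring care, since $\mathcal{L}_{rings}$ has no inverse operation. The first conjunct is $\bigwedge_{j=1}^m \neg(y_j + 1 = 0)$. For the second, the key observation is that in an integral $K$-algebra a single equation $(1+y_j)(1+x_k) = 1$ simultaneously witnesses that $1+y_j$ is a unit and that its inverse has the form $1+x_k$ with $x_k \in A$; conversely, if $(1+y_j)^{-1}$ exists and equals $1+a$ for some $a \in A = \{x_1,\ldots,x_m\}$, then $a = x_k$ for some $k$ and the equation holds. Thus the second conjunct becomes
\[
\bigwedge_{j=1}^m \bigvee_{k=1}^m \bigl( (1+y_j)(1+x_k) = 1 \bigr).
\]

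Clause (7) appears to contain a bounded universal quantifier ``for all $c_0,\ldots,c_{n_i-2} \in B$'', but because $B = \{y_1,\ldots,y_m\}$ is indexed by finitely many variables, this unfolds into a finite conjunction over $(j_0,\ldots,j_{n_i-2}) \in \{1,\ldots,m\}^{n_i-1}$. For each such tuple, let $f(X) = X^{n_i} + X^{n_i-1} + y_{j_{n_i-2}}X^{n_i-2} + \cdots + y_{j_1}X + y_{j_0}$ and let $f'(X)$ be its formal derivative; both are polynomial expressions in the variables over $\mathcal{L}_{rings}(K)$. The existence of a simple root in $-1 + A$ becomes $\bigvee_{k=1}^m \bigl( f(-1+x_k) = 0 \,\wedge\, f'(-1+x_k) \neq 0 \bigr)$. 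Taking $\phi_i := \phi_i^{(1)} \wedge \cdots \wedge \phi_i^{(7)}$ yields the desired quantifier-free formula. The only nontrivial point in the whole argument is the quantifier-free encoding of invertibility in clause (6); the rest is bookkeeping.
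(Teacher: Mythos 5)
Your proposal is correct and follows essentially the same route as the paper: unfold each clause of $C_i$ into a finite Boolean combination of polynomial (in)equations over $\mathcal{L}_{rings}(K)$, using the derivative criterion for simple roots. Your treatment of clause (6) via $(1+y_j)(1+x_k)=1$ supplies the one detail the paper leaves implicit, and your direct substitution of $-1+x_k$ in clause (7) avoids even the dummy $\exists z$ appearing in the paper's displayed formula.
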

  \begin{claimproof}
    Straightforward.  For example, to express that
    \begin{equation*}
      \{y_1,\ldots,y_m\} - \{y_1,\ldots,y_m\} \subseteq \{x_1,\ldots,x_m\},
    \end{equation*}
    the following formula works:
    \begin{equation*}
      \bigwedge_{j=1}^m \bigwedge_{k=1}^m \bigvee_{\ell=1}^m (y_j -
      y_k = x_\ell).
    \end{equation*}
    Probably the most complicated thing to express is the condition on
    simple roots, which can be written as
    \begin{gather*}
      \bigwedge_{j_0 = 1}^m \bigwedge_{j_1 = 1}^m \cdots \bigwedge_{j_{n_i - 2} = 1}^m ~ \bigvee_{k = 1}^m \exists z : z = -1 + x_k ~\wedge \\
      z^{n_i} + z^{n_i - 1} + y_{j_{n_i - 2}} z^{n_i - 2} + \cdots + y_{j_1} z + y_{j_0} = 0 \\
      \wedge~
      n_i z^{n_i - 1} + (n_i - 1)z^{n_i - 2} + (n_i - 2) y_{j_{n_i - 2}} z^{n_i - 3} + \cdots + 2 y_{j_2} z + y_{j_1} \ne 0.  \qedhere
    \end{gather*}
  \end{claimproof}
  For $0 \le i \le m$ let $\{e''_{i,1},\ldots,e''_{i,m}\}$ be an
  enumeration of $A''_i$ (possibly with repeats).  Then the following quantifier-free formula
  holds:
  \begin{gather*}
    \bigwedge_{i=0}^{m-1}
    \phi_i(e''_{i,1},\ldots,e''_{i,m};e''_{i+1,1},\ldots,e''_{i+1,m}) \\
    \wedge \bigwedge_{i=0}^m \theta_{A_i}(e''_{i,1},\ldots,e''_{i,m}) \\
    \wedge \theta'_{\{0\}}(e''_{n,1},\ldots,e''_{n,m}).
  \end{gather*}
  Indeed, this merely says
  \begin{gather*}
    \text{$C_i(\{e''_{i,1},\ldots,e''_{i,m}\},\{e''_{i+1,1},\ldots,e''_{i+1,m}\})$
      holds for each $i = 0, \ldots, m-1$} \\ \text{and $A_i \subseteq
      \{e''_{i,1},\ldots,e''_{i,m}\}$ for each $i$} \\ \text{and
      $\{0\} \subsetneq \{e''_{n,1},\ldots,e''_{n,m}\}$,}
  \end{gather*}
  and these things hold because $\{e''_{i,1},\ldots,e''_{i,m}\} =
  A''_i$.

  Now because $K$ is existentially closed in $R$, we can find
  $\{e'_{i,j}\}_{0 \le i \le m, ~ 1 \le j \le m}$ \emph{in $K$} such
  that
  \begin{gather*}
    \bigwedge_{i=0}^{m-1}
    \phi_i(e'_{i,1},\ldots,e'_{i,m};e'_{i+1,1},\ldots,e'_{i+1,m}) \\
    \wedge \bigwedge_{i=0}^m \theta_{A_i}(e'_{i,1},\ldots,e'_{i,m}) \\
    \wedge \theta'_{\{0\}}(e'_{n,1},\ldots,e'_{n,m}).
  \end{gather*}
  Let $A'_i = \{e'_{i,1},\ldots,e'_{i,m}\}$.  Then this says
  \begin{gather*}
    \text{$C_i(A'_i,A'_{i+1})$ holds for each $i = 0, \ldots, m-1$}
    \\ \text{and $A_i \subseteq A'_i$ for each $i$} \\ \text{and
      $\{0\} \subsetneq A'_n$.}
  \end{gather*}
  Taking $A'_i = A_i = \{0\}$ for $i > m$, we are done.
\end{proof}
\begin{theorem} \label{exists}
  If $K$ is large and countable, then there is a gt-henselian field
  topology on $K$.
\end{theorem}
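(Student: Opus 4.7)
The plan is to carry out the Kiltinen--Podewski style diagonal construction sketched in the note on the proofs, using the apparatus of suitable sequences built up in the preceding sections. The idea is to inductively produce an increasing chain $A^0_\bullet \subseteq A^1_\bullet \subseteq A^2_\bullet \subseteq \cdots$ of essentially finite suitable sequences in $K$ such that at stage $j+1$ we strictly enlarge the $j$th entry. Starting the chain from the trivial essentially finite suitable sequence $A^0_\bullet = (\{0\},\{0\},\ldots)$ of Remark~\ref{zero2}, the limit will then be suitable with every entry properly containing $\{0\}$, hence will define a gt-henselian topology by Lemma~\ref{obvious}.

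Concretely: given $A^j_\bullet$, apply Lemma~\ref{ec-trick} with $n = j$ to obtain an essentially finite suitable sequence $A^{j+1}_\bullet$ in $K$ with $A^{j+1}_i \supseteq A^j_i$ for every $i$ and $A^{j+1}_j \supsetneq A^j_j$. Set $A^\omega_i = \bigcup_{j \in \Nn} A^j_i$. By Lemma~\ref{union-duh}, the sequence $\{A^\omega_i\}_{i \in \Nn}$ is suitable. For each fixed $i$, the step taken at stage $j = i$ forces $A^{i+1}_i \supsetneq A^i_i \ni 0$, so $A^\omega_i$ contains a nonzero element and in particular $A^\omega_i \supsetneq \{0\}$. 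Therefore $A^\omega_\bullet$ falls into the second case of Lemma~\ref{obvious}, so $\{A^\omega_i : i \in \Nn\}$ is a neighborhood basis for a gt-henselian field topology on $K$.

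Since Lemmas~\ref{ec-trick}, \ref{union-duh}, and \ref{obvious} are already in place, the remaining argument is a purely formal countable diagonal assembly; I expect no additional obstacle here. The substantive difficulty has been absorbed into Lemma~\ref{ec-trick}: at each inductive step one must enlarge the sequence \emph{inside $K$ itself}, and this is precisely where largeness is used, via Fact~\ref{Kuhlmann} and the existential closedness of $K$ in the henselian auxiliary ring $R$ of Remark~\ref{forge}, which allows one to pull the simple-root requirement in condition $C_i$ down from $R$ to $K$.
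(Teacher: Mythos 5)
Your proposal is correct and is essentially identical to the paper's own proof: both start from the trivial suitable sequence of Remark~\ref{zero2}, iterate Lemma~\ref{ec-trick} to strictly enlarge the $j$th entry at stage $j$, pass to the union via Lemma~\ref{union-duh}, and conclude with Lemma~\ref{obvious}. The only difference is a harmless shift in indexing.
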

\begin{proof}
  Let $A^0_i = \{0\}$ for each $i$.  Then $\{A^0_i\}_{i \in \Nn}$ is a
  suitable sequence (Remark~\ref{zero2}).  For $0 < j < \omega$, build
  an essentially finite suitable sequence $\{A^j_i\}_{i \in \Nn}$ in
  $K$ such that $A^j_i \supseteq A^{j-1}_i$ and $A^j_j \supsetneq
  A^{j-1}_j$, using Lemma~\ref{ec-trick} and induction on $j$.  Let
  $A^\omega_i = \bigcup_{j=0}^\infty A^j_i$ for each $i$.  Then
  $\{A^\omega_i\}_{i \in \Nn}$ is a suitable sequence in $K$ by
  Lemma~\ref{union-duh}.  Moreover, $A^\omega_j \supseteq A^j_j
  \supsetneq \{0\}$ for each $j \ge 1$, so no $A^\omega_i$ is $\{0\}$.
  By Lemma~\ref{obvious}, we get a gt-henselian topology.
\end{proof}


\section{The \'etale-open topology and gt-henselianity} \label{bonus}
Continue to fix a countable field $K$.  Recall that $\Ee$ denotes the \'etale-open topology.  Until Proposition~\ref{wave},
work in following setting:
\begin{assumption} \label{sump}
  $K$ is large.  $n$ is a positive integer.  $S \subseteq K^n$ is a
  set such that $\bar{0}$ is in the $\Ee$-closure of $S$, but
  $\bar{0} \notin S$.
\end{assumption}
Let $\Ll_P$ be the expansion of the language of rings by a new $n$-ary
relation symbol $P$.  An $\Ll_P$-structure is a pair $(R,P)$ where $R$
is an $\Ll_{rings}$-structure and $P \subseteq R^n$.
\begin{lemma} \label{bridge}
  There is an $\Ll_P$-structure $(R,S_R)$ extending $(K,S)$, with the
  following properties:
  \begin{enumerate}
  \item $(K,S)$ is existentially closed in $(R,S_R)$.
  \item $R$ is a henselian local domain with maximal ideal $\mm$ and
    residue field $K$, and $\mm$ is non-zero.
  \item $S_R$ contains an $n$-tuple $(a_1,\ldots,a_n)$ with each $a_i
    \in \mm$.
  \end{enumerate}
\end{lemma}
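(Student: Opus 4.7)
The plan is to take $R = K[[t]]$ as in Remark~\ref{forge}: this is a henselian local domain with residue field $K$ and non-zero maximal ideal $\mm = tR$, and by Fact~\ref{Kuhlmann}, $K$ is existentially closed in $R$ as $\Lr$-structures. Condition (2) of the lemma is then immediate, and condition (3) will follow once we arrange for $S_R$ to contain a tuple in $\mm^n \setminus \{\bar{0}\}$. The remaining work is to define $S_R \subseteq R^n$ so that (1) holds.

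I will use compactness in $\Ll_P$, setting $S_R = S \cup \{\bar{c}\}$ for a tuple $\bar{c} \in \mm^n \setminus \{\bar{0}\}$ to be chosen. Unpacking $(K, S) \prec_\exists (R, S_R)$: an existential $\Ll_P(K)$-formula $\exists \bar{y}\, \psi(\bar{y})$ satisfied in $(R, S_R)$---with $\psi$ a conjunction of $\Lr$-literals and $\pm P$-literals---can be analyzed by cases on whether each positive $P$-atom $P(\bar{u}_i(\bar{y}))$ is witnessed by an element of $S$ or by $\bar{c}$. Using Fact~\ref{Kuhlmann}, all cases where every positive $P$-atom lies in $S$ pull back to $(K, S)$. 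The remaining obligation, to first approximation, is that for each universal $\Ll_P(K)$-sentence $\phi = \forall \bar{y}\,\neg(\rho(\bar{y}) \wedge P(\bar{u}(\bar{y})))$ true in $(K, S)$, the tuple $\bar{c}$ avoids the ``forbidden set'' $B_\phi := \{\bar{u}(\bar{r}) : \bar{r} \in R^{|\bar{y}|},\ \rho(\bar{r})\} \subseteq R^n$. Conjunctions of multiple $P$-atoms, and interactions with negative $P$-atoms, give rise to similar but more elaborate forbidden sets via a parallel case analysis. By Fact~\ref{Kuhlmann}, $B_\phi \cap K^n$ is disjoint from $S$.

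By compactness in $\Ll_P$, it suffices to find, for any finite collection of bad pairs $(\bar{u}_j, \rho_j)_{j \le k}$, some $\bar{c} \in \mm^n \setminus \{\bar{0}\}$ avoiding every $B_j$. This is the key technical step, and it is where the $\Ee$-closure hypothesis enters: each $\bar{u}_j(\{\bar{b} \in K^{|\bar{y}_j|} : \rho_j(\bar{b})\})$ is disjoint from $S$ in $K^n$, so by $\bar{0} \in \overline{S}^{\,\Ee}$ cannot contain any $\Ee$-open neighborhood of $\bar{0}$. The main obstacle is that the existence statement $\exists \bar{c} \in \mm^n \setminus \{\bar{0}\} : \bigwedge_j \bar{c} \notin B_j$ is $\exists\forall$ in logical structure, so Fact~\ref{Kuhlmann} does not apply verbatim. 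I anticipate the resolution via a Hensel-lifting argument: search for $\bar{c}$ of the form $t \bar{a}$ with $\bar{a} \in R^n$, and translate the condition ``$t \bar{a} \in B_j$'' back---using henselianity of $R$ to reduce to the $t = 0$ fiber---to the existence of $K$-points of $\bar{u}_j$ applied to $\rho_j(K)$ arbitrarily close to $\bar{0}$ in the $\Ee$-topology, a situation the $\Ee$-closure hypothesis forbids.
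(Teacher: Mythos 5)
There is a genuine gap, and it occurs before the step you flag as the ``key technical step'': the choice $S_R = S \cup \{\bar c\}$ cannot yield existential closedness in $\Ll_P$, for any choice of $\bar c$. What your case analysis underestimates is the effect of negative $P$-literals, i.e.\ of universal $\Ll_P(K)$-sentences of the form $\forall \bar y\,(\rho(\bar y) \to P(\bar u(\bar y)))$ that happen to hold in $(K,S)$. Preserving such a sentence forces $\bar u(\rho(R)) \subseteq S_R$, and $\bar u(\rho(R))$ will typically contain infinitely many points of $R^n \setminus K^n$, whereas $S \cup \{\bar c\}$ contains at most one. Concretely: let $K$ be a countable real closed field, $n=1$, $S = \{y \in K : y > 0\}$. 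Then $0$ is in the $\Ee$-frontier of $S$ (the $\Ee$-topology is the order topology here), and $S$ is exactly the set of nonzero squares, so $\forall y\,\forall z\,(z^2 = y \wedge y \ne 0 \to P(y))$ holds in $(K,S)$. In $(K[[t]],\, S \cup \{c\})$ it fails at $y=(1+t)^2$ or $y=(2+t)^2$ (these are nonzero squares lying neither in $K$ nor in $\mm$), so the existential sentence $\exists y\,\exists z\,(z^2=y \wedge y\ne 0 \wedge \neg P(y))$ is true upstairs and false downstairs. Thus $S_R$ must in general be an infinite and rather uncontrolled set, your compactness/forbidden-set scheme does not produce it, and the Hensel-lifting resolution of the $\exists\forall$ obstruction is in any case only announced, not carried out.

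The paper's proof avoids all of this by never trying to build $S_R$ by hand. It takes a saturated elementary extension $(K^*,S^*) \succeq (K,S)$ and constructs a $K$-algebra homomorphism from the henselization $K[X_1,\ldots,X_n]^{\hens}$ of $K[X_1,\ldots,X_n]_{(X_1,\ldots,X_n)}$ into $K^*$ sending $(X_1,\ldots,X_n)$ into $S^*$; this is where the hypothesis $\bar 0 \in \overline{S}^{\,\Ee}$ enters, since each affine \'etale neighborhood gives an E-set $f(U(K)) \ni \bar 0$ which must meet $S$, and saturation assembles these finite approximations. Then $R$ is the image (henselian local with residue field $K$, being a quotient of a henselian local ring) and $S_R = S^* \cap R^n$, so that existential closedness is automatic from $(K,S) \subseteq (R,S_R) \subseteq (K^*,S^*)$ together with $(K,S) \preceq (K^*,S^*)$. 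Note in particular that $R$ is not $K[[t]]$; the distinguished tuple in $S_R \cap \mm^n$ is just the image of $(X_1,\ldots,X_n)$. If you want to salvage your approach, the minimal fix is to replace ``$S_R = S \cup \{\bar c\}$'' by ``$S_R$ is the trace of the interpretation of $P$ in a suitable elementary extension,'' which is essentially the paper's argument.
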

\begin{proof}
  By an \emph{affine \'etale neighborhood} of $\bar{0} \in \Aa^n(K)$,
  we mean a triple $(U,f,p)$ where $U$ is an affine $K$-variety, $f :
  U \to \Aa^n$ is an \'etale morphism, and $p \in U(K)$ is a
  $K$-rational point with $f(p) = \bar{0}$.  Recall that $K[U]$
  denotes the coordinate ring of $U$.  Affine \'etale neighborhoods form a
  filtered category in a natural way, and  \'etale local ring of
  $\Aa^n$ at $\bar{0}$ is
  \begin{equation*}
    \varinjlim_{(U,f,p)} K[U]
  \end{equation*}
  which can also be described as the henselization of the local ring
  $K[X_1,\ldots,X_n]_{(X_1,\ldots,X_n)}$ (see \cite[Lemmas~04GV,
    05KS]{stacks-project}).  We will write this henselization as
  $K[X_1,\ldots,X_n]^{\hens}$.
  \begin{claim}
    If $(U,f,p)$ is an affine \'etale neighborhood of $\bar{0}$, then
    there is a $K$-algebra homomorphism $K[U] \to K$ mapping the
    $n$-tuple $(X_1,\ldots,X_n)$ into the set $S$.
  \end{claim}
  \begin{claimproof}
    A homomorphism $K[U] \to K$ corresponds to a $K$-point $q \in
    U(K)$, and the composition
    \begin{equation*}
      K[X_1,\ldots,X_n] \to K[U] \stackrel{q}{\to} K
    \end{equation*}
    corresponds to the image $f(q) \in \Aa^n(K)$.  In particular, the
    image of the tuple $(X_1,\ldots,X_n)$ in $K$ is equal to the point
    $f(q) \in \Aa^n(K) = K^n$.

    Therefore, it suffices to find a point $q \in U(K)$ such that
    $f(q) \in S$.  The $K$-rational image $f(U(K)) \subseteq \Aa^n(K)$
    is an E-set in $\Aa^n(K)$ containing $\bar{0} = f(p)$, because
    $(U,f,p)$ is an \'etale neighborhood.  Then $f(U(K))$ is an
    $\Ee$-neighborhood of $\bar{0}$, so it intersects $S$.
  \end{claimproof}
  Let $(K^*,S^*)$ be a highly saturated elementary extension of
  $(K,S)$.
  \begin{claim} \label{dos}
    There is a $K$-algebra homomorphism $K[X_1,\ldots,X_n]^{\hens} \to
    K^*$ mapping the tuple $(X_1,\ldots,X_n)$ into $S^*$.
  \end{claim}
  \begin{claimproof}
    Because $K[X_1,\ldots,X_n]^{\hens}$ is a direct limit of
    coordinate rings $K[U]$ with $(U,f,p)$ an affine \'etale
    neighborhood of $\bar{0}$, this follows from the previous claim by
    saturation of $K^*$.
  \end{claimproof}
  Fix such a homomorphism $K[X_1,\ldots,X_n]^{\hens} \to K^*$ as in
  Claim~\ref{dos}.  Let $R \subseteq K^*$ be the image.  Then $R$ is a quotient of
  the henselian local ring $K[X_1,\ldots,X_n]^{\hens}$, so $R$ is
  itself a henselian local ring, and $K$ is the residue field of $R$.
  Let $S_R$ be the restriction of $S^*$ to $R$.  Each $X_i$ lies in
  the maximal ideal of $K[X_1,\ldots,X_n]^{\hens}$, so its image in
  $R$ lies in the maximal ideal $\mm$ of $R$.  Therefore, the image of
  the $n$-tuple $(X_1,\ldots,X_n)$ is an $n$-tuple in $\mm$ which lies
  in $S_R$.  This tuple must be non-zero, because $\bar{0} \notin S^*$;
  therefore $\mm$ is non-trivial.

  The embeddings $(K,S) \hookrightarrow (R,S_R) \hookrightarrow
  (K^*,S^*)$ and the elementary inclusion $(K,S) \preceq (K^*,S^*)$
  together imply that $(K,S)$ is existentially closed in $(R,S_R)$.
\end{proof}
Fix a structure $(R,S_R)$ and tuple $(a_1,\ldots,a_n)$ as in
Lemma~\ref{bridge}.  If we disregard the set $S_R$ and tuple $\ba$,
then we are in the setting of Remark~\ref{forge}.  Therefore,
Lemma~\ref{core} continues to apply in this setting.
\begin{lemma} \label{re-iterate}
  Let $A_0, A_1, A_2, \ldots$ be an essentially finite suitable
  sequence in $K$.  Given any $m$, there is an essentially finite
  suitable sequence $A'_0, A'_1, \ldots$ in $R$ such
  \begin{itemize}
  \item $A'_i
    \supseteq A_i$ for each $i$
  \item $A'_m \ne \{0\}$.
  \item The set $S_R$ contains an $n$-tuple from $A'_m$.
  \end{itemize}
\end{lemma}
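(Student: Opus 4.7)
The plan is to mimic the proof of Lemma~\ref{iterate}, but replacing the single ``seed'' $t \in \mm$ by the entire tuple $(a_1,\ldots,a_n)$ supplied by Lemma~\ref{bridge}. Each $a_i$ already lies in $\mm$ and the tuple $(a_1,\ldots,a_n)$ already lies in $S_R$, so dropping all of them into the $m$-th set at once will simultaneously make $A'_m \ne \{0\}$ and guarantee that $A'_m$ contains an $n$-tuple from $S_R$.

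More precisely, first set $A'_i = A_i$ for $i > m$ and $A'_m = A_m \cup \{a_1,\ldots,a_n\}$. I then check the two things needed before invoking Lemma~\ref{core}: (i) the partial sequence $A'_m, A'_{m+1}, A'_{m+2},\ldots$ continues to satisfy $C_i(A'_i,A'_{i+1})$ for every $i \ge m$, which follows from $C_i(A_i,A_{i+1})$ by the positive monotonicity in the first argument recorded in Remark~\ref{easy-props}; and (ii) $A_m \subseteq A'_m \subseteq A_m + \mm$, which holds because each $a_j$ lies in $\mm$ and so has residue $0 \in A_m$. In particular $A'_m$ is still finite.

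Then I construct $A'_{m-1}, A'_{m-2}, \ldots, A'_0$ by downward recursion, applying Lemma~\ref{core} at each stage with $A = A_i$, $B = A_{i+1}$, $B' = A'_{i+1}$; the induction hypothesis $A_{i+1} \subseteq A'_{i+1} \subseteq A_{i+1} + \mm$ together with $C_i(A_i, A_{i+1})$ furnishes a finite $A'_i \subseteq A_i + \mm$ with $A_i \subseteq A'_i$ and $C_i(A'_i, A'_{i+1})$. After $m$ such steps the sequence $A'_0, A'_1, \ldots$ is essentially finite and suitable in $R$, satisfies $A'_i \supseteq A_i$ for every $i$, and contains the tuple $(a_1,\ldots,a_n) \in S_R$ inside $A'_m$; in particular $A'_m \supsetneq \{0\}$.

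There is no real obstacle, only a small bookkeeping point: one must be careful that introducing several new elements in row $m$ at once does not break the conditions involving row $m+1$. But since the $A'_i$ for $i > m$ are left untouched and $C_m$ only becomes easier as $A'_m$ grows (again by the monotonicity in Remark~\ref{easy-props}), this is automatic, and the recursive application of Lemma~\ref{core} proceeds verbatim as in Lemma~\ref{iterate}.
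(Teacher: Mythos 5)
Your proposal is correct and follows the paper's proof essentially verbatim: seed row $m$ with the tuple $(a_1,\ldots,a_n)\subseteq\mm$ from Lemma~\ref{bridge} instead of a single $t\in\mm$, then recurse downward with Lemma~\ref{core}. The only point worth making explicit is that $A'_m\supsetneq\{0\}$ because $\bar{0}\notin S_R$ forces the tuple $(a_1,\ldots,a_n)$ to be nonzero, which is exactly the remark the paper makes.
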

\begin{proof}
  Just like the proof of Lemma~\ref{iterate}, except that we take
  $\{a_1,\ldots,a_n\}$ as the ``seed,'' i.e., setting $A'_m = A_m
  \cup \{a_1,\ldots,a_n\}$ and applying Lemma~\ref{core} to
  recursively choose $A'_{m-1}$, $A'_{m-2}$, \ldots, $A'_0$.  We can still
  apply Lemma~\ref{core} because the elements $a_1,\ldots,a_n$ come
  from the maximal ideal of $R$.  Since $\bar{0}$ isn't in $S$ or $S_R$, the tuple $\ba$ is nonzero, and so the requirement $A'_m \supseteq \{a_1,\ldots,a_n\}$ ensures that $A'_m \supsetneq \{0\}$.
\end{proof}
\begin{lemma} \label{re-ec-trick}
  Let $A_0, A_1, A_2, \ldots$ be an essentially finite suitable
  sequence in $K$.  Given any $m$, there is an essentially finite
  suitable sequence $A'_0, A'_1, \ldots$ in $K$ such
  \begin{itemize}
  \item $A'_i
    \supseteq A_i$ for each $i$
  \item $A'_m \ne \{0\}$.
  \item The set $S$ contains an $n$-tuple from $A'_m$.
  \end{itemize}
\end{lemma}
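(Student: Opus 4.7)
The plan is to repeat the proof of Lemma~\ref{ec-trick} essentially verbatim, but working in the expanded language $\Ll_P$ and using that $(K,S)$ is existentially closed in $(R,S_R)$ (as $\Ll_P$-structures) rather than merely that $K$ is existentially closed in $R$ as rings. The only genuinely new content is encoding the condition ``some $n$-tuple from the $m$th set lies in $S$'' as an additional quantifier-free $\Ll_P(K)$-formula and transferring it alongside everything else.

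In more detail: first apply Lemma~\ref{re-iterate} to the given essentially finite suitable sequence $A_0, A_1, \ldots$ and index $m$, producing an essentially finite suitable sequence $\{A''_i\}_{i \in \Nn}$ in $R$ with $A''_i \supseteq A_i$, $A''_m \ne \{0\}$, and containing an $n$-tuple of $S_R$ in $A''_m$. Pick $M$ large enough that $|A''_i| \le M$ for all $i$ and $A''_i = \{0\}$ for $i > M$, and enumerate $A''_i = \{e''_{i,1}, \ldots, e''_{i,M}\}$ with possible repeats.

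Next, just as in Lemma~\ref{ec-trick}, write down quantifier-free $\Ll_{rings}(K)$-formulas $\phi_i$, $\theta_{A_i}$, $\theta'_{\{0\}}$ expressing $C_i$ applied to two $M$-tuples, the inclusion $A_i \subseteq \{x_1,\ldots,x_M\}$, and the strict inclusion $\{0\} \subsetneq \{x_1,\ldots,x_M\}$. To handle the new bullet, write the additional quantifier-free $\Ll_P$-formula
\begin{equation*}
\psi(x_1, \ldots, x_M) \;:=\; \bigvee_{j_1=1}^M \cdots \bigvee_{j_n=1}^M P(x_{j_1}, \ldots, x_{j_n}),
\end{equation*}
which says $\{x_1,\ldots,x_M\}^n \cap P \ne \varnothing$. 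Then the conjunction of all the $\phi_i(\bar{e}''_i; \bar{e}''_{i+1})$ for $i < M$, all the $\theta_{A_i}(\bar{e}''_i)$, $\theta'_{\{0\}}(\bar{e}''_m)$, and $\psi(\bar{e}''_m)$ is a quantifier-free $\Ll_P(K)$-formula that holds in $(R, S_R)$ with the $e''_{i,j}$ as witnesses.

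By existential closure of $(K,S)$ in $(R,S_R)$ as $\Ll_P$-structures, there exist elements $e'_{i,j} \in K$ satisfying the same formula, where the interpretation of $P$ on $K$ is $S$. Setting $A'_i = \{e'_{i,1}, \ldots, e'_{i,M}\}$ for $i \le M$ and $A'_i = \{0\}$ for $i > M$ gives an essentially finite suitable sequence in $K$ with $A'_i \supseteq A_i$, $A'_m \supsetneq \{0\}$, and $S \cap (A'_m)^n \ne \varnothing$. The main (mild) obstacle is just being careful that existential closure is being invoked in the \emph{expanded} language $\Ll_P$ rather than in $\Ll_{rings}$ alone; this was already arranged by Lemma~\ref{bridge}(1).
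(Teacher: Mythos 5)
Your proposal is correct and is exactly the argument the paper intends: the paper's own proof is a two-sentence sketch saying the result ``follows formally from Lemma~\ref{re-iterate} because $(K,S)$ is existentially closed in $(R,S_R)$, with $\Ll_P$-formulas in place of $\Ll_{rings}$-formulas,'' and your write-up simply fills in those details (the extra quantifier-free disjunction $\psi$ over $P$ being the only new ingredient). No gaps.
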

This is identical to Lemma~\ref{re-iterate}, but with $(R,S_R)$
replaced with $(K,S)$.
\begin{proof}
  As in the proof of Lemma~\ref{ec-trick}, this follows formally from
  Lemma~\ref{re-iterate}, because $(K,S)$ is existentially closed in $(R,S_R)$.  The formulas are now $\mathcal{L}_P$-formulas rather than $\mathcal{L}_{rings}$-formulas, however.
\end{proof}
\begin{proposition} \label{wave}
  Let $S$ be a subset of $K^n$ and $\bar{b} \in K^n$ be a point in the $\Ee$-frontier of $S$ (so $\bb$ is not in $S$, but $\bb$ is in the $\Ee$-closure of $S$).  Then there is a gt-henselian topology $\tau$ on
  $K$ such that $\bb$ is in the $\tau$-frontier of $S$.
\end{proposition}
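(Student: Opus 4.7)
The plan is to mimic the proof of Theorem~\ref{exists}, but using Lemma~\ref{re-ec-trick} instead of Lemma~\ref{ec-trick} at each stage to simultaneously guarantee that every basic neighborhood in our constructed topology meets $S$.

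First I would reduce to the case $\bb = \bar{0}$. Both the $\Ee$-topology and any gt-henselian field topology are invariant under translation (translations are isomorphisms of varieties, and field topologies are translation-invariant by definition). So replacing $S$ by $S - \bb$ puts us in the setting of Assumption~\ref{sump}, with $\bar{0}$ playing the role of $\bb$. Fix $(R,S_R)$ and a nonzero $n$-tuple $\ba = (a_1,\ldots,a_n) \in \mm^n \cap S_R$ as provided by Lemma~\ref{bridge}. Note that $\ba \ne \bar{0}$ because $\bar{0} \notin S \subseteq S_R$.

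Next, construct an increasing sequence of essentially finite suitable sequences $\{A^j_i\}_{i \in \Nn}$ in $K$, indexed by $j \in \Nn$, as follows. Start with $A^0_i = \{0\}$ for all $i$, which is suitable by Remark~\ref{zero2}. Given $\{A^{j-1}_i\}_{i \in \Nn}$, apply Lemma~\ref{re-ec-trick} with $m = j$ to produce an essentially finite suitable sequence $\{A^j_i\}_{i \in \Nn}$ in $K$ with $A^j_i \supseteq A^{j-1}_i$ for every $i$, such that $A^j_j$ contains an $n$-tuple $\bar{s}^{(j)} \in S$. Because $\bar{0} \notin S$, the tuple $\bar{s}^{(j)}$ is nonzero, so in particular $A^j_j \supsetneq \{0\}$.

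Then set $A^\omega_i = \bigcup_{j=0}^\infty A^j_i$. By Lemma~\ref{union-duh}, $\{A^\omega_i\}_{i \in \Nn}$ is a suitable sequence in $K$, and for every $i \ge 1$ we have $A^\omega_i \supseteq A^i_i \supsetneq \{0\}$, so no $A^\omega_i$ is $\{0\}$. Lemma~\ref{obvious} therefore yields a gt-henselian field topology $\tau$ on $K$ in which $\{A^\omega_i : i \in \Nn\}$ is a neighborhood basis of $0$.

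Finally, to see $\bar{0}$ is in the $\tau$-frontier of $S$, fix any $\tau$-neighborhood $U$ of $\bar{0}$; it contains some $A^\omega_i$, and then $\bar{s}^{(i)} \in (A^i_i)^n \subseteq (A^\omega_i)^n \subseteq U^n$, so $U^n \cap S \ne \varnothing$. Since $\bar{0} \notin S$ by hypothesis, $\bar{0}$ lies in the $\tau$-frontier of $S$. Translating back gives the conclusion for $\bb$.

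The main step, where all the real work happens, is Lemma~\ref{re-ec-trick}, which is already proved; the proposition itself is then essentially a bookkeeping argument interleaving the existing Kiltinen--Podewski-style construction with a single additional condition tracking that each basic neighborhood meets $S$. The only subtle point is confirming that reducing to $\bb = \bar{0}$ is legitimate, and this is immediate from translation-invariance of both topologies in play.
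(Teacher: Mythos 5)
Your proposal matches the paper's proof essentially step for step: translate to reduce to $\bb = \bar{0}$, rerun the construction of Theorem~\ref{exists} with Lemma~\ref{re-ec-trick} in place of Lemma~\ref{ec-trick} so that each $A^j_j$ (hence each $A^\omega_j$) meets $S$ in an $n$-tuple, and conclude via Lemmas~\ref{union-duh} and~\ref{obvious}. The one point you elide is that Assumption~\ref{sump} --- and hence Lemma~\ref{bridge} and Lemma~\ref{re-ec-trick} --- requires $K$ to be large, which is not a hypothesis of the proposition; the paper supplies this first, observing that since $S$ is not $\Ee$-closed the $\Ee$-topology is non-discrete and therefore $K$ is large.
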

\begin{proof}
	Note that the $\Ee$-topology is non-discrete because $S$ is not $\Ee$-closed, and so $K$ is large \cite[Theorem~C(1),
    Proposition~4.5]{firstpaper}.   Replacing $S$ with $S - \bar{b}$, we may
  assume $\bar{b} = \bar{0}$.  Then we are in the setting of
  Assumption~\ref{sump}.  As in the proof of Theorem~\ref{exists},
  build an increasing sequence of suitable sequences
  \begin{equation*}
    A^0_\bullet \subseteq A^1_\bullet \subseteq A^2_\bullet \subseteq \cdots
  \end{equation*}
  \emph{but using Lemma~\ref{re-ec-trick} rather than
  Lemma~\ref{ec-trick}}, to ensure that $S$ contains a tuple from
  $A^j_j$ for each $j$.  Then the limit sequence $A^\omega_\bullet$ is
  a suitable sequence such that $S$ contains a tuple from $A^\omega_j$
  for each $j$.  Since $S$ does not contain $\bar{0}$, it follows that
  no $A^\omega_j$ is $\{0\}$, and Lemma~\ref{obvious} shows that the
  family $\{A^\omega_j\}_{j \in \Nn}$ is a neighborhood basis for a
  gt-henselian topology $\tau$.  Because $S$ contains a tuple from
  every $A^\omega_j$, it follows that $\bar{0}$ is in the
  $\tau$-closure of $S$.
\end{proof}
\begin{corollary} \label{wave2}
	If $S \subseteq K^n$ is not closed in the $\Ee$-topology, then there is a gt-henselian topology $\tau$ on $K$ such that $S$ is not $\tau$-closed.
	\end{corollary}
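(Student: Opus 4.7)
The plan is to deduce this immediately from Proposition~\ref{wave}. If $S \subseteq K^n$ is not $\Ee$-closed, then the $\Ee$-closure of $S$ strictly contains $S$, so I can pick a point $\bar{b}$ in the $\Ee$-closure of $S$ with $\bar{b} \notin S$. By definition, $\bar{b}$ lies in the $\Ee$-frontier of $S$.

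Now apply Proposition~\ref{wave} to $S$ and $\bar{b}$: there exists a gt-henselian topology $\tau$ on $K$ such that $\bar{b}$ is in the $\tau$-frontier of $S$. In particular, $\bar{b}$ is in the $\tau$-closure of $S$ but $\bar{b} \notin S$, so $S$ is not $\tau$-closed.

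There is no real obstacle here; the work has all been done in Proposition~\ref{wave}, and the corollary is just the observation that ``not closed'' means exactly the existence of a frontier point. The only thing worth checking is that we do not need the hypothesis that $K$ is large as a separate assumption: if $S$ fails to be $\Ee$-closed then the $\Ee$-topology on $K^n$ is non-discrete, which forces $K$ to be large (as used at the start of the proof of Proposition~\ref{wave}), so Proposition~\ref{wave} is applicable.
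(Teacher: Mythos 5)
Your proof is correct and is essentially the paper's own argument: the corollary is immediate from Proposition~\ref{wave} once one notes that a set fails to be closed exactly when it has a frontier point (the paper's one-line proof says the same, modulo an evident typo writing ``empty'' for ``non-empty''). Your added check that largeness need not be assumed separately, since it is derived inside the proof of Proposition~\ref{wave}, is a nice touch but not a departure from the paper.
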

\begin{proof}
	A set is non-closed iff its frontier is empty, so this follows directly from Proposition~\ref{wave}.
\end{proof}
At this point, the proof is nearly complete modulo machinery from
\cite{firstpaper,field-top-2}.  Recall the following definition from
\cite[Definition~1.2]{firstpaper}
\begin{definition}
  A \emph{system of topologies} over a field $K$ is a map assigning to
  each variety $V$ over $K$ a topology on $V(K)$ such that the
  following hold for any morphism $f : V \to W$ of $K$-varieties:
  \begin{enumerate}
  \item The map $V(K) \to
    W(K)$ is continuous.
  \item If $V \to W$ is a (scheme-theoretic) closed immersion, then
    $V(K) \to W(K)$ is a (topological) closed embedding.
  \item If $V \to W$ is a (scheme-theoretic) open immersion, then
    $V(K) \to W(K)$ is a (topological) open embedding.
  \end{enumerate}
\end{definition}
\begin{example}
  If $\tau$ is a field topology on $K$, then $\tau$ induces a topology
  on $V(K)$ for any $K$-variety $V$, and this gives a system of
  topologies over $K$.
\end{example}
In the following, we identify a field topology $\tau$ with its induced
system of topologies over $K$.
\begin{fact} \phantomsection \label{old-facts}
  \begin{enumerate}
  \item $\Ee$ is a system of topologies over $K$
    \cite[Theorem~A]{firstpaper}.
  \item \label{of2} If $\tau$ is a gt-henselian field topology on $K$,
    then $\tau$ refines $\Ee$
    \cite[Lemma~8.14]{field-top-2}.  In other words, any set which is
    open in the $\Ee$-topology is $\tau$-open.
  \end{enumerate}
\end{fact}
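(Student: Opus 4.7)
The plan is to treat the two parts of Fact~\ref{old-facts} separately; both unwind from the definitions without any real difficulty, and the essential input in each case is the stability of étaleness under base change (together, for (2), with equivalent form (3) of gt-henselianity from the introduction).

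For (1), I would verify the three axioms of a system of topologies for the $\Ee$-topology. Continuity of the induced map $V(K) \to W(K)$ for a morphism $f : V \to W$ is the heart of it: given a basic $\Ee$-open $g(X(K)) \subseteq W(K)$ coming from an étale $g : X \to W$, form the fibre product $X' = X \times_W V$ with projection $g' : X' \to V$. Since étaleness is stable under base change, $g'$ is étale, and the universal property of the fibre product on $K$-points identifies $f^{-1}(g(X(K)))$ with $g'(X'(K))$, which is an E-set and therefore $\Ee$-open. For an open immersion $V \hookrightarrow W$ (itself étale), both directions of the open-embedding condition are immediate: a basic E-set on $V$ remains a basic E-set on $W$ lying inside $V(K)$, and conversely basic E-sets on $W$ pull back to basic E-sets on $V$. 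For a closed immersion $V \hookrightarrow W$, one must show that the subspace topology on $V(K)$ inherited from $W(K)$ agrees with the intrinsic $\Ee$-topology on $V$; the easy direction uses pullback as above, while the other requires Zariski-local lifting of a basic E-set $h(Y(K)) \subseteq V(K)$ to an étale cover of an open neighborhood of $V$ in $W$.

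For (2), the argument is short. By the third equivalent form of gt-henselianity from the introduction, for any étale morphism $g : X \to V$ of $K$-varieties the induced map $X(K) \to V(K)$ is a local homeomorphism with respect to $\tau$, and in particular an open map. Hence every basic E-set $g(X(K))$ is $\tau$-open, and since an arbitrary $\Ee$-open set is by definition a union of such basic E-sets, it is $\tau$-open as a union of $\tau$-open sets.

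The main obstacle is the closed-immersion case in (1), which rests on a local-lifting statement for étale morphisms along a closed immersion and is the only place where non-formal input from étale-cohomology-style arguments enters. Everything else in (1), together with the entirety of (2), reduces cleanly to base-change stability of étaleness and the equivalent formulations of gt-henselianity already recorded in the introduction.
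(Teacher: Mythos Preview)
Your outline is correct, but note that the paper does not actually prove Fact~\ref{old-facts}: it is stated as a fact imported from the literature, with part~(1) cited as \cite[Theorem~A]{firstpaper} and part~(2) as \cite[Lemma~8.14]{field-top-2}. So there is no ``paper's own proof'' to compare against here.

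That said, your sketch accurately reflects how those cited results are established. For~(1), the continuity axiom is exactly the base-change argument you give, the open-immersion axiom is immediate since open immersions are \'etale, and the closed-immersion axiom is, as you identify, the only non-formal point: one needs that an \'etale morphism $Y \to V$ extends, Zariski-locally on $W$, to an \'etale morphism over $W$ (and that $V(K)$ is closed in $W(K)$, which follows since the complement $W \setminus V$ is a Zariski-open and hence an E-set). For~(2), your argument via equivalent condition~(3) of gt-henselianity---\'etale maps are local homeomorphisms on $K$-points, hence open, so every E-set is $\tau$-open---is precisely the content of \cite[Lemma~8.14]{field-top-2}.
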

The next fact shows that a system of topologies over $K$ is determined
by what it does on $\Aa^n(K) = K^n$.
\begin{fact}[{\cite[Lemma~4.2]{firstpaper}}] \label{affine-reduction}
  Let $\mathcal{T}_1$ and $\mathcal{T}_2$ be two systems of topologies
  over $K$.  Suppose the $\mathcal{T}_1$-topology on $\Aa^n(K)$
  refines the $\mathcal{T}_2$-topology on $\Aa^n(K)$ for each $n$.
  Then $\mathcal{T}_1$ refines $\mathcal{T}_2$.
\end{fact}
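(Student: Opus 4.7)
The plan is to prove refinement for arbitrary $K$-varieties by a two-step reduction: first from general varieties to affine varieties via a finite affine open cover (using axiom (3)), and then from affine varieties to affine space via a closed immersion (using axiom (2)). Axiom (1) will not be needed.

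First, I would show that refinement can be checked locally. Let $V$ be a $K$-variety and choose a finite affine open cover $V = U_1 \cup \cdots \cup U_k$, which exists because $V$ is of finite type and hence quasi-compact. Applying axiom (3) to both $\mathcal{T}_1$ and $\mathcal{T}_2$, each $U_i(K)$ is an open subspace of $V(K)$ in either system, and moreover the topology assigned by $\mathcal{T}_j$ to $U_i(K)$ coincides with the subspace topology inherited from the $\mathcal{T}_j$-topology on $V(K)$. Consequently, a set $W \subseteq V(K)$ is $\mathcal{T}_j$-open if and only if $W \cap U_i(K)$ is $\mathcal{T}_j$-open in $U_i(K)$ for each $i$. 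If I can show $\mathcal{T}_1$ refines $\mathcal{T}_2$ on each $U_i(K)$, then any $\mathcal{T}_2$-open $W \subseteq V(K)$ meets each $U_i(K)$ in a $\mathcal{T}_2$-open set by the criterion, hence in a $\mathcal{T}_1$-open set by hypothesis, hence $W$ itself is $\mathcal{T}_1$-open in $V(K)$. This reduces the problem to the affine case.

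Second, for an affine $K$-variety $V$, pick any surjection $K[X_1,\ldots,X_n] \twoheadrightarrow K[V]$; this corresponds to a closed immersion $V \hookrightarrow \Aa^n$. By axiom (2) applied to both systems, the induced map $V(K) \hookrightarrow \Aa^n(K)$ is a closed topological embedding in each of $\mathcal{T}_1$ and $\mathcal{T}_2$. In particular, the $\mathcal{T}_j$-topology on $V(K)$ is the subspace topology inherited from the $\mathcal{T}_j$-topology on $\Aa^n(K)$. Since refinement of topologies clearly passes to subspaces, the assumption that $\mathcal{T}_1$ refines $\mathcal{T}_2$ on $\Aa^n(K)$ yields the same refinement on $V(K)$, completing the reduction.

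The argument is essentially bookkeeping against the three axioms of a system of topologies. The only mildly delicate point is the local-to-global step, where axiom (3) must be invoked simultaneously for $\mathcal{T}_1$ and $\mathcal{T}_2$ to identify the subspace topology on each $U_i(K)$ with the topology the respective system assigns to it; once that identification is in hand, everything else is a straightforward translation between openness in the ambient space and openness in its affine pieces. Beyond this I do not anticipate any serious obstacle.
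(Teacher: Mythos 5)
This statement is cited as a Fact from \cite[Lemma~4.2]{firstpaper}, so the present paper contains no proof of its own; your argument is correct and is exactly the standard two-step reduction that the cited lemma uses (first pass to a finite affine open cover using axiom (3), noting that $V(K)=\bigcup_i U_i(K)$ since every $K$-point factors through some affine chart, then embed each affine piece into some $\Aa^n$ as a closed subscheme using axiom (2)). No gaps.
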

\begin{definition}
  If $V$ is a variety over $K$, then the \emph{$\mathcal{I}$-topology
  on $V(K)$} is the intersection of the $\tau$-topologies as $\tau$
  ranges over gt-henselian field topologies on $K$.  In other words, a
  set $U \subseteq V(K)$ is $\mathcal{I}$-open if $U$ is $\tau$-open
  for every gt-henselian field topology on $K$.
\end{definition}
Note that if $K$ admits no gt-henselian topologies (e.g., $K$ is
non-large), then $\mathcal{I}$ is the discrete system of topologies.
\begin{theorem} \label{2m}
  If $K$ is countable, then $\mathcal{I} = \Ee$.  In other words, if
  $V$ is a $K$-variety, then a subset $U \subseteq V(K)$ is
  $\Ee$-open if and only if $U$ is $\tau$-open for every
  gt-henselian topology $\tau$ on $K$.
\end{theorem}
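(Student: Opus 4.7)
The plan is to show the two refinements separately and then invoke Fact~\ref{affine-reduction} to pass from affine space to arbitrary varieties.

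The easy direction is $\mathcal{I}$ refines $\mathcal{E}$. By Fact~\ref{old-facts}(\ref{of2}), every gt-henselian field topology $\tau$ refines $\mathcal{E}$ on every variety $V$. Since $\mathcal{I}$ is by construction the intersection of the $\tau$-topologies, $\mathcal{I}$ also refines $\mathcal{E}$; equivalently, every $\mathcal{E}$-open subset of $V(K)$ is $\mathcal{I}$-open.

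For the reverse direction I first check that $\mathcal{I}$ is itself a system of topologies over $K$. This is a routine check: for a morphism $f : V \to W$, continuity transfers from each $\tau$ to the intersection because pre-image commutes with intersection of topologies. For a closed immersion $V \hookrightarrow W$, $V(K)$ is $\tau$-closed in $W(K)$ for every $\tau$, hence $\mathcal{I}$-closed; moreover, since each $\tau_V$ agrees with the $\tau_W$-subspace topology, a set closed in $\mathcal{I}_V$ is closed in each $\tau_V = \tau_W|_V$, hence (being a closed subset of a closed subspace) closed in $\tau_W$, and thus closed in $\mathcal{I}_W$. The open immersion axiom is analogous, using that $V(K)$ is $\tau$-open in $W(K)$ for each $\tau$.

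With both $\mathcal{I}$ and $\mathcal{E}$ systems of topologies, Fact~\ref{affine-reduction} reduces the remaining direction to showing that $\mathcal{E}$ refines $\mathcal{I}$ on $\mathbb{A}^n(K) = K^n$ for each $n$. Suppose $U \subseteq K^n$ is $\mathcal{I}$-open; its complement $F = K^n \setminus U$ is $\tau$-closed for every gt-henselian $\tau$. By the contrapositive of Corollary~\ref{wave2}, $F$ must be $\mathcal{E}$-closed, so $U$ is $\mathcal{E}$-open. This gives the needed refinement on each $\mathbb{A}^n(K)$, and Fact~\ref{affine-reduction} promotes it to a global refinement. Combining the two refinements yields $\mathcal{I} = \mathcal{E}$.

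There is essentially no obstacle beyond assembling the pieces; the work is all in Corollary~\ref{wave2} (via Proposition~\ref{wave}), which already packages the hard inductive construction of gt-henselian topologies witnessing failures of $\mathcal{E}$-closure. The one minor bookkeeping point is the verification that $\mathcal{I}$ is a system of topologies, which is needed to apply Fact~\ref{affine-reduction}.
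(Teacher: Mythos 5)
Your proof is correct and follows essentially the same route as the paper: the easy refinement via Fact~\ref{old-facts}(\ref{of2}), the hard refinement on $\Aa^n(K)$ via Corollary~\ref{wave2}, and Fact~\ref{affine-reduction} to globalize. Your explicit verification that $\mathcal{I}$ is a system of topologies is a detail the paper leaves implicit, and it is a worthwhile (and correctly executed) addition, since Fact~\ref{affine-reduction} is stated for systems of topologies.
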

\begin{proof}
  By Fact~\ref{old-facts}(\ref{of2}), $\mathcal{I}$ refines
  $\Ee$.  By Corollary~\ref{wave2}, the
  $\Ee$-topology on $K^n = \Aa^n(K)$ refines the
  $\mathcal{I}$-topology on $K^n$.  By Fact~\ref{affine-reduction},
  $\Ee$ refines $\mathcal{I}$.
\end{proof}

\begin{acknowledgment}
  This paper grew out of conversations with Minh Tran, Jinhe Ye, and
  especially Erik Walsberg (who first suggested that
  Theorem~\ref{second-main} might be true).  The author was supported
  by the Ministry of Education of China (Grant No.\@ 22JJD110002).
\end{acknowledgment}

\bibliographystyle{plain} \bibliography{mybib}{}

\end{document}